\newtheorem{theorem}{Theorem}
\newtheorem{corollary}{Corollary}
\newtheorem{lemma}{Lemma}
\newtheorem{proposition}{Proposition}
\theoremstyle{definition}
\newtheorem{definition}{Definition}
\newtheorem{remark}{Comment}
\newtheorem{example}{Example}
\newcommand{\eps}{\varepsilon}
\newcommand{\LL}{ \mathcal{L}}
\renewcommand{\hat}{\widehat}
\newcommand{\Ep}{{\mathrm{E}}}
\renewcommand{\Pr}{{\mathrm{P}}}
\newcommand{\RR}{\mathbb{R}}
\DeclareMathOperator{\Corr}{Corr}
\DeclareMathOperator{\Var}{Var}
\begin{document}

\title[Comparison and Anti-Concentration]{Comparison and Anti-Concentration Bounds for Maxima of Gaussian Random Vectors}
%\thanks{V. Chernozhukov and D. Chetverikov  are supported by a National Science Foundation grant. K. Kato is supported by the Grant-in-Aid for Young Scientists (B) (22730179), the Japan Society for the Promotion of Science. }
\author[Chernozhukov]{Victor Chernozhukov}
\author[Chetverikov]{Denis Chetverikov}
\author[Kato]{Kengo Kato}

\address[V. Chernozhukov]{
Department of Economics  \& Operations Research Center, MIT, 50 Memorial Drive, Cambridge, MA 02142, USA.
}
\email{vchern@mit.edu}

\address[D. Chetverikov]{
Department of Economics, UCLA, Bunche Hall, 8283, 315 Portola Plaza, Los Angeles, CA 90095, USA.
}
\email{chetverikov@econ.ucla.edu}

\address[K. Kato]{
Graduate School of Economics, University of Tokyo, 7-3-1 Hongo, Bunkyo-ku, Tokyo 113-0033, Japan.
}
\email{kkato@e.u-tokyo.ac.jp}

\date{First version: January 21, 2013. This version: \today. }

\begin{abstract}
Slepian and Sudakov-Fernique type inequalities, which compare
expectations of maxima of Gaussian random vectors under certain restrictions on the covariance matrices, play an important role in probability
theory, especially in empirical process and extreme value theories.  Here we give
explicit comparisons of expectations of smooth functions and distribution functions of maxima
of Gaussian random vectors without any restriction on the covariance matrices.
We also establish  an anti-concentration inequality
for the maximum of a Gaussian random vector, which derives a useful upper bound
on the L\'{e}vy concentration function for the Gaussian maximum. The bound is dimension-free
and applies to vectors with arbitrary covariance matrices.  This anti-concentration inequality plays a crucial role in establishing bounds on the Kolmogorov distance between maxima of Gaussian random vectors.
These results have immediate applications in mathematical statistics. As an example of application, we establish a conditional  multiplier central limit theorem for maxima of sums of independent random vectors where the dimension of the vectors is  possibly much larger than the sample size.
\end{abstract}
\keywords{Slepian inequality, anti-concentration, L\'{e}vy concentration function, maximum of Gaussian random vector, conditional multiplier central limit theorem}
\subjclass[2000]{60G15, 60E15, 62E20}

\maketitle

\maketitle

\section{Introduction}

We derive a bound on the difference in expectations of smooth functions of maxima of finite dimensional Gaussian random vectors.
We also derive a bound on  the Kolmogorov distance between distributions of
these maxima.   The key property of these bounds is that they depend on the dimension $p$ of Gaussian random vectors only through $\log p$, and on the max norm of the difference between the covariance matrices of the vectors.  These results  extend and complement the work of \cite{Chatterjee2005b} that derived an explicit Sudakov-Fernique type bound on the difference of expectations of maxima of Gaussian random vectors. See also \cite{AdlerTaylor2007}, Chapter 2.  As an application, we establish a conditional  multiplier central limit theorem for maxima of sums of independent random vectors where the dimension of the vectors is  possibly much larger than the sample size. In all these results, we allow for arbitrary covariance structures between the coordinates in random vectors, which is plausible especially in applications to high-dimensional statistics. We stress that  the derivation of bounds on the Kolmogorov distance is by no means trivial and  relies on a new {\em anti-concentration} inequality for maxima of Gaussian random vectors,
which is  another main result of  this paper (see Comment \ref{rem: concentration vs anticoncentration} for what anti-concentration inequalities here precisely refer to and how they differ from the concentration inequalities).
These anti-concentration bounds are non-trivial in the following sense: (i) they apply to every dimension $p$ and they are {\em dimension-free} in the sense that the bounds depend on the dimension $p$ only through the expectation of the maximum of the Gaussian random vector, thereby admitting direct extensions to the infinite dimensional case, namely, separable Gaussian processes (see \cite{CCK13} for this extension and applications to empirical processes).  This dimension-free nature is parallel to the Gaussian concentration inequality, which states that the supremum concentrates around the expected supremum. (ii) They allow for arbitrary covariance structures between the coordinates in Gaussian random vectors, and (iii) they are sharp in the sense that there is an example for which the bound is tight up to a dimension independent constant. We note that these anti-concentration bounds are sharper than those that result from application of the universal reverse isoperimetric inequality of \cite{Ball1993} (see also \cite{Bentkus2003}, p.386-367).
%(see Comment \ref{rem: ball}).
%This happens due to the special structure of  the sets of interest.
% See also the discussion after Theorem \ref{thm: anticoncentration}.

Comparison inequalities for Gaussian random vectors play an important role in probability theory, especially in empirical process and extreme value theories.
We refer the reader to \cite{Slepian1962}, \cite{Leadbetter1983}, \cite{G85}, \cite{LT91}, \cite{LS01}, \cite{LS02}, \cite{Chatterjee2005b}, and \cite{Y09} for standard references on this topic.
The anti-concentration phenomenon has attracted considerable interest in the context of random matrix theory and the Littlewood-Offord problem in number theory. See, for example, \cite{RV08}, \cite{RV09}, and \cite{VR07} who remarked that \textit{``concentration is better understood than anti-concentration''}.
Those papers were concerned with the anti-concentration in the Euclidean norm for sums of independent random vectors, and  the topic and the proof technique here are substantially different from theirs.

Either of the comparison or anti-concentration bounds derived in the paper have many immediate statistical applications, especially in the context of high-dimensional statistical inference, where the dimension $p$ of vectors of interest is much larger than the sample size (see \cite{BV11} for a textbook treatment of the recent developments of high-dimensional statistics). In particular,
the results established here are helpful in deriving an invariance principle for sums of high-dimensional random vectors, and also in establishing
the validity of the multiplier bootstrap for inference in practice.  We refer the reader to our companion paper \cite{CCK12}, where
the results established here are applied in several important statistical problems, particularly the analysis of Dantzig selector of \cite{CandesTao2007} in the non-Gaussian setting.

 The proof strategy for our anti-concentration inequalities is to directly bound the density function of the maximum of a Gaussian random vector. The paper by \cite{NV09} is concerned with bounding such a density (see \cite{NV09}, Proposition 3.12) but under  positive covariances restriction.  This is related to but different from our anti-concentration bounds. The crucial assumption in their Proposition 3.12 is   positivity of all the covariances between the coordinates in the Gaussian random vector, which does not hold in our targeted applications in high-dimensional statistics, for example, analysis of Danzig selector. Moreover, their upper bound on the density depends on the inverse of the lower bound on the covariances -- and hence, for example, if there are two independent coordinates in the Gaussian random vector, then the upper bound becomes infinite. Our anti-concentration bounds do not require such positivity (or other) assumptions on covariances and hence are not implied by the results of \cite{NV09}.   Another method for deriving reverse isoperimetric inequalities is to use geometric results of  \cite{Nazarov03}, as shown by \cite{Klivans08}, which leads to dimension-dependent anti-concentration inequalities, which are essentially different from ours.   Moreover, our density-bounding proof technique is substantially different from that of \cite{NV09} based on Malliavin calculus or  \cite{Nazarov03} based on geometric arguments.

%{\bf After the initial submission of this paper, we have noted that \cite{Klivans08}, building on \cite{Nazarov03}, derived a reverse isoperimetric inequality for Gaussian measures for intersections of halfspaces, which leads to a related anti-concentration inequality to ours (see also \cite{HKM2012}, Lemma 3.4).  Their anti-concentration inequality is, however, dimension-dependent and essentially different from ours.
%Moreover, their proof is based on the geometric arguments due to \cite{Nazarov03}, and while interesting, it is  completely different from the density-bounding arguments given here.}

The rest of the paper is organized as follows. In Section \ref{sec: Gaus and multiplier}, we present comparison bounds for Gaussian random vectors and its application, namely the conditional multiplier central limit theorem. In Section \ref{sec: anti-concentration}, we present anti-concentration bounds for maxima of Gaussian random vectors. In Sections \ref{sec: proof for section 2} and \ref{sec: proof for section 3}, we give proofs of the theorems in Sections \ref{sec: Gaus and multiplier} and \ref{sec: anti-concentration}. The Appendix contains a proof of a technical lemma.

{\bf Notation}.
Denote by $(\Omega,\mathcal{F},\Pr)$  the underlying probability space.
For $a,b \in \RR$, we write $a_{+} = \max \{ 0,a \}$ and $a \vee b = \max \{ a,b \}$. Let $1(\cdot)$ denote the indicator function.
The transpose of a vector $z$ is denoted by $z^{T}$. For a function $g: \RR \to \RR$, we use the notation $\| g \|_{\infty} = \sup_{z \in \RR} | g(z) |$. Let $\phi(\cdot)$ and $\Phi (\cdot)$ denote the density and distribution functions of the standard Gaussian distribution, respectively: $\phi(x) = (1/\sqrt{2 \pi}) e^{-x^{2}/2}$ and $\Phi (x) = \int_{-\infty}^{x} \phi(t) dt$.

\section{Comparison Bounds and Multiplier Bootstrap}
\label{sec: Gaus and multiplier}

\subsection{Comparison bounds}

Let $X =(X_{1},\dots,X_{p})^{T}$ and $Y=(Y_{1},\dots,Y_{p})^{T}$ be centered Gaussian random vectors in $\RR^{p}$
with covariance matrices $\Sigma^{X} =(\sigma^{X}_{jk})_{1 \leq j,k \leq p}$ and
$\Sigma^{Y}=(\sigma^{Y}_{jk})_{1 \leq j,k \leq p}$, respectively.
The purpose of this section is to give error bounds on
the difference of the expectations of smooth functions and the distribution functions
of
\begin{equation*}
\max_{1 \leq j \leq p} X_{j} \quad \text{and} \quad \max_{1 \leq j \leq p} Y_{j}
\end{equation*}
in terms of $p$,
\begin{equation*}
\Delta :=\max_{1\leq j,k\leq p} | \sigma^{X}_{jk}-\sigma^{Y}_{jk} |, \  \text{and} \ a_p:=\Ep [ \max_{1 \leq j \leq p} (Y_{j}/\sigma^{Y}_{jj})].
\end{equation*}

The problem of comparing distributions of maxima is  of intrinsic difficulty since the maximum function $z=(z_{1},\dots,z_{p})^{T} \mapsto \max_{1 \leq j \leq p} z_{j}$ is non-differentiable.
To circumvent the problem, we use a smooth approximation of the maximum function. For $z=(z_{1},\dots,z_{p})^{T} \in \RR^{p}$, consider the function:
\begin{equation*}
F_{\beta}(z):=\beta^{-1}\log\left(\sum_{j=1}^{p}\exp(\beta z_{j})\right),
\end{equation*}
which approximates the maximum function, where $\beta > 0$ is the smoothing parameter that controls the level of approximation (we call this function the ``smooth max function'').
Indeed, an elementary calculation shows that for every $z \in \RR^{p}$,
\begin{equation}\label{eq: smooth max property}
0 \leq  F_{\beta}(z)- \max_{1 \leq j \leq p} z_{j} \leq \beta^{-1} \log p.
\end{equation}
This smooth max function arises in the definition of ``free energy" in spin glasses. See, for example, \cite{Talagrand2003} and \cite{Panchenko2013}.
Here is the first theorem of this section.

\begin{theorem}[Comparison bounds for smooth functions]
\label{theorem:comparison}
For every  $g \in C^2(\RR)$ with $\| g' \|_{\infty} \vee \| g'' \|_{\infty} < \infty$ and every $\beta>0$,
\begin{align*}
&| \Ep[g(F_{\beta}(X)) - g(F_{\beta}(Y))] |  \leq (\| g'' \|_{\infty}/2 +  \beta \| g' \|_{\infty})\Delta,\\
\intertext{and hence}
& | \Ep   [ g (\max_{1 \leq j \leq p}X_{j} ) - g(\max_{1 \leq j \leq p} Y_{j}) ] | \leq (\| g'' \|_{\infty}/2 +  \beta \| g' \|_{\infty}) \Delta + 2 \beta^{-1} \| g' \|_{\infty} \log p.
\end{align*}
\end{theorem}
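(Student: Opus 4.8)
\textit{Proof strategy.} The plan is to prove the first inequality by a Gaussian interpolation (smart path) argument and then deduce the second from the uniform approximation bound \eqref{eq: smooth max property}. Without loss of generality assume $X$ and $Y$ are independent (otherwise replace $Y$ by an independent copy; this changes neither side of the asserted inequalities). Put $m := g \circ F_{\beta}$ and, for $t \in [0,1]$, set $Z(t) := \sqrt{t}\,X + \sqrt{1-t}\,Y$ and $\Psi(t) := \Ep[m(Z(t))]$, so that $\Psi(1) = \Ep[g(F_{\beta}(X))]$ and $\Psi(0) = \Ep[g(F_{\beta}(Y))]$; it then suffices to bound $\int_{0}^{1}\Psi'(t)\,dt$.

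First I would collect the elementary calculus of the smooth max function. With the softmax weights $\pi_{j}(z) := e^{\beta z_{j}}/\sum_{l=1}^{p} e^{\beta z_{l}}$ one has $\partial_{j}F_{\beta} = \pi_{j}$, $\pi_{j} \geq 0$, $\sum_{j}\pi_{j} = 1$, and $\partial_{k}\pi_{j} = \beta(\pi_{j}1(j=k) - \pi_{j}\pi_{k})$. Hence $m \in C^{2}(\RR^{p})$ with $\partial_{j}m = g'(F_{\beta})\pi_{j}$ and $\partial_{j}\partial_{k}m = g''(F_{\beta})\pi_{j}\pi_{k} + g'(F_{\beta})\partial_{k}\pi_{j}$, from which
\begin{equation*}
\sum_{j,k=1}^{p}|\partial_{j}\partial_{k}m(z)| \leq \|g''\|_{\infty}\Big(\sum_{j}\pi_{j}\Big)^{2} + \beta\|g'\|_{\infty}\sum_{j,k}|\pi_{j}1(j=k) - \pi_{j}\pi_{k}| \leq \|g''\|_{\infty} + 2\beta\|g'\|_{\infty},
\end{equation*}
since $\sum_{j,k}|\pi_{j}1(j=k) - \pi_{j}\pi_{k}| = 2(1 - \sum_{j}\pi_{j}^{2}) \leq 2$. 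This dimension-free bound is the crux of the whole argument: it is precisely what forces $p$ to enter only through $\log p$.

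Next I would differentiate $\Psi$. Restricting to $t \in (0,1)$ and using that $\dot Z(t) = X/(2\sqrt{t}) - Y/(2\sqrt{1-t})$ while $m$ has bounded first derivatives, dominated convergence gives $\Psi'(t) = \sum_{j}\Ep[\partial_{j}m(Z(t))\,\dot Z_{j}(t)]$. For fixed $t$ the vector $(X_{j}, Z(t))$ is jointly centered Gaussian with $\Ep[X_{j}Z_{k}(t)] = \sqrt{t}\,\sigma^{X}_{jk}$, so the Gaussian integration-by-parts (Stein) identity yields $\Ep[X_{j}\,\partial_{j}m(Z(t))] = \sqrt{t}\sum_{k}\sigma^{X}_{jk}\Ep[\partial_{j}\partial_{k}m(Z(t))]$, and symmetrically for $Y$. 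The factors $\sqrt{t}$ and $\sqrt{1-t}$ then cancel and
\begin{equation*}
\Psi'(t) = \tfrac{1}{2}\sum_{j,k=1}^{p}(\sigma^{X}_{jk} - \sigma^{Y}_{jk})\,\Ep[\partial_{j}\partial_{k}m(Z(t))].
\end{equation*}
Bounding $|\sigma^{X}_{jk} - \sigma^{Y}_{jk}| \leq \Delta$ and inserting the second-derivative estimate above gives $|\Psi'(t)| \leq \tfrac{1}{2}\Delta(\|g''\|_{\infty} + 2\beta\|g'\|_{\infty})$ uniformly in $t \in (0,1)$; since $\Psi$ is continuous on $[0,1]$ (again by dominated convergence), integrating yields the first inequality. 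The second inequality then follows from \eqref{eq: smooth max property}: $g$ is globally Lipschitz with constant $\|g'\|_{\infty}$, so $|\Ep[g(\max_{j}X_{j})] - \Ep[g(F_{\beta}(X))]| \leq \beta^{-1}\|g'\|_{\infty}\log p$ and likewise for $Y$, and a triangle inequality adds $2\beta^{-1}\|g'\|_{\infty}\log p$.

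I expect the only real technical point to be the rigorous justification of differentiation under the expectation near $t = 0$ and $t = 1$ together with the validity of the Stein identity for $C^{2}$ functions whose derivatives are bounded but not compactly supported; both are routine via truncation and dominated convergence, using that all first and second partials of $m$ are uniformly bounded. Everything else is the explicit softmax computation above, whose one nontrivial content is the dimension-free bound $\sum_{j,k}|\partial_{j}\partial_{k}m| \leq \|g''\|_{\infty} + 2\beta\|g'\|_{\infty}$.
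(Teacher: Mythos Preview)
Your proposal is correct and follows essentially the same approach as the paper: Slepian interpolation $Z(t)=\sqrt{t}X+\sqrt{1-t}Y$, differentiation of $\Psi(t)=\Ep[m(Z(t))]$ via the Gaussian integration-by-parts (Stein) identity, and the dimension-free bound $\sum_{j,k}|\partial_j\partial_k m|\leq \|g''\|_\infty+2\beta\|g'\|_\infty$ coming from the softmax calculus, followed by the smooth-max approximation \eqref{eq: smooth max property} for the second inequality. The paper packages the softmax facts as separate lemmas but the content and order of the argument are the same.
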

\begin{proof}
See Section \ref{sec: proof for section 2}.
\end{proof}

\begin{remark}
Minimizing the second bound with respect to $\beta > 0$, we have
\begin{equation*}
| \Ep [ g (\max_{1 \leq j \leq p}X_{j} ) - g(\max_{1 \leq j \leq p} Y_{j})] |  \leq \| g'' \|_{\infty}\Delta/2+ 2\| g' \|_{\infty}\sqrt{2\Delta\log p}.
\end{equation*}
This result extends the work of \cite{Chatterjee2005b}, which derived the following Sudakov-Fernique type bound on the expectation of the difference between two Gaussian maxima:
\begin{equation*}
| \Ep [ \max_{1 \leq j \leq p}X_{j} - \max_{1 \leq j \leq p} Y_{j}] |  \leq  2\sqrt{2\Delta\log p}.
\end{equation*}
\end{remark}

Theorem \ref{theorem:comparison} is not applicable to functions of the form $g(z) = 1(z \leq x)$ and hence does not directly lead to a bound on the Kolmogorov distance between $\max_{1 \leq j \leq p} X_{j}$ and $\max_{1 \leq j \leq p} Y_{j}$ (recall that the Kolmogorov distance between (the distributions) of two real valued random variables
$\xi$ and $\eta$ is defined by $\sup_{x \in \RR} | \Pr ( \xi \leq x ) - \Pr (\eta \leq x) |$).
Nevertheless, we have the following bounds on the Kolmogorov distance. Recall $a_p=\Ep [ \max_{1 \leq j \leq p} (Y_{j}/\sigma^{Y}_{jj})]$.

\begin{theorem}[Comparison of distributions]\label{cor: distances Gaussian to Gaussian}
Suppose that $p \geq 2$ and $\sigma^{Y}_{jj} > 0$ for all $1 \leq j \leq p$.   Then
\begin{align}
&\sup_{x \in\RR} | \Pr ( \max_{1 \leq j \leq p} X_{j}\leq x ) -\Pr ( \max_{1 \leq j \leq p} Y_{j}\leq x ) | \notag \\
&\qquad \leq C \Delta^{1/3} \left  \{ (1 \vee a_{p}^{2} \vee \log (1/\Delta ) \right  \}^{1/3}  \log^{1/3} p,  \label{eq: K-distance}
\end{align}
where $C>0$ depends only on $\min_{1 \leq j \leq p} \sigma_{jj}^{Y}$ and $\max_{1 \leq j \leq p} \sigma_{jj}^{Y}$ (the right side is understood to be $0$ when $\Delta = 0$). Moreover, in the worst case, $a_{p} \leq \sqrt{2 \log p}$, so that
\[
\sup_{x \in\RR} | \Pr ( \max_{1 \leq j \leq p} X_{j}\leq x ) -\Pr ( \max_{1 \leq j \leq p} Y_{j}\leq x ) | \leq C' \Delta^{1/3} \{1 \vee \log (p/\Delta)\}^{2/3},
\]
where as before $C' > 0$ depends only on $\min_{1 \leq j \leq p} \sigma_{jj}^{Y}$ and $\max_{1 \leq j \leq p} \sigma_{jj}^{Y}$.
\end{theorem}
\begin{proof}
See Section \ref{sec: proof for section 2}.
\end{proof}

The first bound (\ref{eq: K-distance}) is generally sharper than the latter. To see this, consider the simple case where $a_{p} = O(1)$ as $p \to \infty$, which would happen, for example, when $Y_{1},\dots,Y_{p}$ come from discretization of a single continuous Gaussian process. Then the right side on (\ref{eq: K-distance}) is $o(1)$ if $\Delta (\log p) \log \log p= o(1)$, while the second bound requires $\Delta (\log p)^{2}= o(1)$.

\begin{remark}[On the proof strategy] Bounding the Kolmogorov distance between $\max_{1 \leq j \leq p}X_{j}$ and $\max_{1 \leq j \leq p}Y_{j}$ is  not immediate from Theorem \ref{theorem:comparison}  and this step relies on the anti-concentration inequality for the maximum of a  Gaussian random vector, which we will study in Section \ref{sec: anti-concentration}. 
More formally, by smoothing the indicator and maximum functions, we obtain from Theorem \ref{theorem:comparison} a bound of the following form:
\[
\inf_{\beta,\delta > 0} \{ \LL(\max_{1 \leq j \leq p} Y_{j},\beta^{-1}\log p+\delta) + C(\delta^{-2}+\beta\delta^{-1}) \Delta \},
\]
where $\LL(\max_{1 \leq j \leq p} Y_{j},\epsilon)$ is the L\'{e}vy concentration function for $\max_{1 \leq j \leq p}Y_{j}$ (see Definition \ref{def: levy concentration} in Section \ref{sec: anti-concentration} for the formal definition), and $\beta,\delta > 0$ are smoothing parameters (see equations (\ref{eq: Kdistance-intermediate}) and (\ref{eq: Kdistance-intermediate2}) in the proof of Theorem \ref{cor: distances Gaussian to Gaussian} given in Section \ref{sec: proof for section 2} for the derivation of the above bound). The bound (\ref{eq: K-distance}) then follows from bounding the L\'{e}vy concentration function by using the anti-concentration inequality derived in Section \ref{sec: anti-concentration}, and optimizing the bound with respect to $\beta,\delta$. 
%Informally, by smoothing the indicator and maximum functions, we obtain from Theorem \ref{theorem:comparison} bounds on $\Pr ( \max_{1 \leq j \leq p} X_{j}\leq x ) - \Pr ( \max_{1 \leq j \leq p} Y_{j}\leq x + \textrm{error} )$, where  the error term comes from the smoothing errors. To further take the error term outside the latter expression, we need to compare $\Pr ( \max_{1 \leq j \leq p} Y_{j}\leq x + \textrm{error} )$ with $\Pr (\max_{1 \leq j \leq p} Y_{j} \leq x)$, which is done via the anti-concentration inequality. 

The proof of Theorem \ref{cor: distances Gaussian to Gaussian} is substantially different from the (``textbook'') proof of classical Slepian's inequality. The simplest form of Slepian's inequality states that
\begin{equation*}
\Pr ( \max_{1 \leq j \leq p} X_{j} \leq x) \leq \Pr ( \max_{1 \leq j \leq p} Y_{j} \leq x), \ \forall x \in \RR,
\end{equation*}
whenever $\sigma_{jj}^{X} = \sigma_{jj}^{Y}$ and $\sigma_{jk}^{X} \leq \sigma_{jk}^{Y}$ for all $1 \leq j,k \leq p$.
This inequality is immediately deduced from the following expression:
\begin{multline}
\Pr ( \max_{1 \leq j \leq p} X_{j}\leq x ) -\Pr ( \max_{1 \leq j \leq p} Y_{j}\leq x ) \\
= \sum_{1 \leq j < k \leq p} (\sigma_{jk}^{X} - \sigma_{jk}^{Y}) \int_{0}^{1} \left \{ \int_{-\infty}^{x} \cdots \int_{-\infty}^{x} \frac{\partial^{2} f_{t}(z)}{\partial z_{j}\partial z_{k}} dz \right \} dt, \label{eq: slepian expression}
\end{multline}
where $\sigma_{jj}^{X}  = \sigma_{jj}^{Y},1 \leq \forall j \leq p$,  is assumed. Here $f_{t}$ denotes the density function of $N(0,t \Sigma^{X} + (1-t) \Sigma^{Y})$. See \cite{Leadbetter1983}, p.82, for this expression.
The expression (\ref{eq: slepian expression}) is of  importance and indeed a source of many interesting probabilistic results (see, for example, \cite{LS02} and \cite{Y09} for recent related works).
It is not clear (or at least non-trivial), however,  whether a bound similar in nature to Theorem \ref{cor: distances Gaussian to Gaussian} can be deduced from the expression (\ref{eq: slepian expression}) when there is no restriction on the covariance matrices except for the condition that $\sigma_{jj}^{X}  = \sigma_{jj}^{Y},1 \leq \forall j \leq p$, and here we take the different route.
\end{remark}

The key features of Theorem \ref{cor: distances Gaussian to Gaussian} are: (i) the bound on the Kolmogorov distance between the maxima of Gaussian random vectors in $\RR^{p}$ depends on the dimension $p$ only through  $\log p$ and the maximum difference of the covariance matrices $\Delta$, and (ii) it allows for arbitrary covariance matrices for $X$ and $Y$ (except for the nondegeneracy condition that $\sigma_{jj}^{Y} > 0, \ 1 \leq \forall j \leq p$). These features have an important implication to statistical applications, as discussed below.

\subsection{Conditional multiplier central limit theorem}

Consider the following problem. Suppose that $n$ independent centered random vectors in $\RR^{p}$ of observations $Z_{1},\dots,Z_{n}$ are given. Here
$Z_{1},\dots,Z_{n}$ are generally non-Gaussian, and the dimension $p$ is allowed to increase with $n$ (that is, the case where $p=p_{n} \to \infty$ as $n \to \infty$ is allowed).
We suppress the possible dependence of $p$ on $n$ for the notational convenience.   Suppose that each $Z_{i}$ has a finite covariance matrix $\Ep [ Z_{i} Z_{i}^{T} ]$.
Consider the following normalized sum:
\begin{equation*}
S_{n} := (S_{n,1},\dots,S_{n,p})^{T} = \frac{1}{\sqrt{n}} \sum_{i=1}^{n} Z_{i}.
\end{equation*}
The problem here is to approximate the distribution of $\max_{1 \leq j \leq p} S_{n,j}$.

Statistics of this form arise frequently in modern statistical applications. The exact distribution of $\max_{1 \leq j \leq p} S_{n,j}$ is generally unknown.
An intuitive idea to approximate the distribution of $\max_{1 \leq j \leq p} S_{n,j}$ is to use the Gaussian approximation. Let $V_{1},\dots,V_{n}$ be independent Gaussian random vectors in $\RR^{p}$ such that $V_{i} \sim N(0,\Ep [ Z_{i} Z_{i}^{T} ])$, and define
\begin{equation*}
T_{n} := (T_{n,1},\dots,T_{n,p}) := \frac{1}{\sqrt{n}} \sum_{i=1}^{n} V_{i} \sim N(0,n^{-1} {\textstyle \sum}_{i=1}^{n} \Ep [Z_{i}Z_{i}^{T}] ).
\end{equation*}
It is expected that the distribution of $\max_{1 \leq j \leq p} T_{n,j}$ is close to that of $\max_{1 \leq j \leq p} S_{n,j}$ in the following sense:
\begin{equation}
 \sup_{x \in \RR} | \Pr ( \max_{1 \leq j \leq p} S_{n,j} \leq x ) - \Pr ( \max_{1 \leq j \leq p} T_{n,j} \leq x )   | \to 0, \ n \to \infty. \label{eq: gaussian-approx}
\end{equation}
When $p$ is fixed, (\ref{eq: gaussian-approx}) will follow from the classical Lindeberg-Feller central limit theorem, subject to the Lindeberg conditions. The recent paper by \cite{CCK12} established conditions under which this Gaussian approximation (\ref{eq: gaussian-approx}) holds even when $p$ is comparable or much larger than $n$.
For example, \cite{CCK12} proved that if $c_{1} \leq n^{-1} \sum_{i=1}^{n} \Ep [ Z_{ij}^{2} ] \leq C_{1}$ and $\Ep [ \exp (| Z_{ij} |/ C_{1} ) ] \leq 2$ for all $1 \leq i \leq n$ and $1 \leq j \leq p$ for some $0 < c_{1} < C_{1}$,
then (\ref{eq: gaussian-approx}) holds as long as $\log p = o(n^{1/7})$.

The Gaussian approximation (\ref{eq: gaussian-approx}) is in itself an important step, but in the general case where the covariance matrix $n^{-1} \sum_{i=1}^{n} \Ep [ Z_{i} Z_{i}^{T} ]$ is unknown, it is not directly applicable for purposes of statistical inference. In such cases, the following {\em multiplier bootstrap} procedure will be useful. Let $\eta_{1},\dots,\eta_{n}$ be independent standard Gaussian random variables independent of $Z_{1}^{n} := \{ Z_{1},\dots,Z_{n} \}$. Consider the following randomized sum:
\begin{equation*}
S_{n}^{\eta} := (S_{n,1}^{\eta}, \dots, S_{n,p}^{\eta} )^{T} := \frac{1}{\sqrt{n}} \sum_{i=1}^{n} \eta_{i} Z_{i}.
\end{equation*}
Since conditional on $Z_{1}^{n}$,
\begin{equation*}
S_{n}^{\eta} \sim N(0,n^{-1} {\textstyle \sum}_{i=1}^{n} Z_{i}Z_{i}^{T}),
\end{equation*}
it is natural to expect that the conditional distribution of $\max_{1 \leq j \leq p} S_{n,j}^{\eta}$ is ``close'' to the distribution of $\max_{1 \leq j \leq p} T_{n,j}$ and hence that of $\max_{1 \leq j \leq p} S_{n,j}$. Note here that the conditional distribution of $S^{\eta}_{n}$ is completely known, which makes this distribution useful for purposes of statistical inference. The following proposition makes this intuition rigorous.

\begin{proposition}[Conditional multiplier central limit theorem]
\label{prop: multiplier bootstrap}
Work with the setup as described above. Suppose that $p \geq 2$ and  there are some constants $0 < c_{1} < C_{1}$ such that
$c_{1} \leq n^{-1} \sum_{i=1}^{n} \Ep [ Z_{ij}^{2} ] \leq C_{1}$ for all $1 \leq j \leq p$. Moreover, suppose that  $\hat{\Delta} := \max_{1 \leq j,k \leq p} | n^{-1} \sum_{i=1}^{n} (Z_{ij}Z_{ik} - \Ep [ Z_{ij}Z_{ik} ])|$ obeys the following conditions: as $n \to \infty$,
\begin{equation}
\hat{\Delta} ( \Ep [\max_{1 \leq j \leq p} T_{n,j}] )^{2} \log p = o_{\Pr}(1), \ \hat{\Delta} (\log p) (1 \vee \log \log p)= o_{\Pr}(1). \label{eq: growth}
\end{equation}
Then we have
\begin{equation}
\sup_{x \in \RR} | \Pr ( \max_{1 \leq j \leq p} S_{n,j}^{\eta} \leq x \mid Z_{1}^{n} ) - \Pr ( \max_{1 \leq j \leq p} T_{n,j} \leq x ) | \stackrel{\Pr}{\to} 0, \ \text{as} \ n \to \infty. \label{eq: cmclt}
\end{equation}
Here recall that $p$ is allowed to increase with $n$.
\end{proposition}

\begin{proof}
Follows immediately from Theorem \ref{cor: distances Gaussian to Gaussian}.
% By Theorem \ref{cor: distances Gaussian to Gaussian}, we have
% \begin{equation*}
% \sup_{x \in \RR} | \Pr ( \max_{1 \leq j \leq p} S_{n,j}^{\eta} \leq x \mid Z_{1}^{n} ) - \Pr ( \max_{1 \leq j \leq p} T_{n,j} \leq x ) | = O\{ \hat{\Delta}^{1/3}(1 \vee \log (p/\hat{\Delta}))^{2/3} \}.
% \end{equation*}
% The right side is $o_{\Pr}(1)$ as soon as $\hat{\Delta} = o_{\Pr}(( \log p)^{-2})$.
\end{proof}

We call this result a ``conditional multiplier central limit theorem,'' where the terminology follows that in  empirical process theory. See \cite{VW96}, Chapter 2.9.
The notable features of this proposition, which inherit from the features of Theorem \ref{cor: distances Gaussian to Gaussian} discussed above, are:  (i) (\ref{eq: cmclt}) can hold even when $p$ is much larger than $n$, and (ii) it allows for arbitrary covariance matrices for $Z_{i}$ (except for the mild scaling condition that $c_{1} \leq n^{-1} \sum_{i=1}^{n} \Ep [ Z_{ij}^{2} ] \leq C_{1}$). The second point is clearly desirable in statistical applications as the information on the true covariance structure is generally (but not always) unavailable.
For the first point, we have the following estimate on $\Ep [ \hat{\Delta} ]$.

\begin{lemma}
\label{lem: Delta estimate}
Let $p \geq 2$. There exists a universal constant $C > 0$ such that
\begin{equation*}
\Ep [ \hat{\Delta} ] \leq C \left [ \max_{1 \leq j \leq p} (n^{-1} {\textstyle \sum}_{i=1}^{n} \Ep [ Z_{ij}^{4} ])^{1/2} \sqrt{\frac{ \log p}{n}}  + ( \Ep [ \max_{1 \leq i \leq n} \max_{1 \leq j \leq p} Z_{ij}^{4} ] )^{1/2}\frac{\log p}{n} \right] .
\end{equation*}
\end{lemma}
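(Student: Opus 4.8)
The plan is to bound $\hat\Delta$ by a supremum of an empirical process indexed by the pairs $(j,k)$ and apply a symmetrization/maximal inequality of the type available for sums of independent mean-zero variables. Write $W_{i}^{(j,k)} := Z_{ij}Z_{ik} - \Ep[Z_{ij}Z_{ik}]$, so that $\hat\Delta = \max_{1\le j,k\le p}\bigl|\, n^{-1}\sum_{i=1}^{n} W_{i}^{(j,k)}\bigr|$. There are at most $p^{2}$ such averages, and for each fixed $(j,k)$ the summands $W_{i}^{(j,k)}$ are independent with mean zero. First I would symmetrize, introducing Rademacher multipliers $\varepsilon_{i}$, to reduce to controlling $\Ep\max_{j,k}\bigl|n^{-1}\sum_{i}\varepsilon_{i}Z_{ij}Z_{ik}\bigr|$, and then invoke a standard maximal inequality for maxima of sums of independent symmetric random variables (in the spirit of the Hoffmann--J\o{}rgensen / Nemirovski-type bounds, or Lemma E.1 of a companion reference), which for a maximum over $M=p^{2}$ coordinates yields a bound of the form
\begin{equation*}
\Ep\Bigl[\max_{1\le j,k\le p}\Bigl| n^{-1}{\textstyle\sum_{i=1}^{n}} W_{i}^{(j,k)}\Bigr|\Bigr]
\;\lesssim\; \sqrt{\frac{\log p}{n}}\;\max_{j,k}\Bigl(n^{-1}{\textstyle\sum_{i=1}^{n}}\Ep\bigl[(Z_{ij}Z_{ik})^{2}\bigr]\Bigr)^{1/2}
\;+\; \frac{\log p}{n}\;\Ep\Bigl[\max_{1\le i\le n}\max_{j,k}|Z_{ij}Z_{ik}|\Bigr].
\end{equation*}
(Here $\log(p^{2}) = 2\log p$ is absorbed into the universal constant.) The two terms are the familiar ``sub-Gaussian'' and ``sub-exponential/truncation'' contributions: the first comes from the variance of the sum, the second from the envelope of the individual summands that must be subtracted off when the $W_{i}^{(j,k)}$ are not uniformly bounded.

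Next I would convert the two quantities on the right into the form stated in the lemma using the Cauchy--Schwarz inequality. For the variance term, $\Ep[(Z_{ij}Z_{ik})^{2}] \le \bigl(\Ep[Z_{ij}^{4}]\bigr)^{1/2}\bigl(\Ep[Z_{ik}^{4}]\bigr)^{1/2} \le \max_{1\le j\le p}\Ep[Z_{ij}^{4}]$, hence
\begin{equation*}
\max_{j,k}\Bigl(n^{-1}{\textstyle\sum_{i}}\Ep[(Z_{ij}Z_{ik})^{2}]\Bigr)^{1/2}\;\le\;\max_{1\le j\le p}\Bigl(n^{-1}{\textstyle\sum_{i}}\Ep[Z_{ij}^{4}]\Bigr)^{1/2},
\end{equation*}
which is exactly the factor multiplying $\sqrt{(\log p)/n}$ in the statement. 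For the envelope term, $\max_{i}\max_{j,k}|Z_{ij}Z_{ik}| = \max_{i}\bigl(\max_{j}|Z_{ij}|\bigr)^{2} = \max_{i}\max_{j}Z_{ij}^{2}$, and then by Cauchy--Schwarz $\Ep\bigl[\max_{i}\max_{j}Z_{ij}^{2}\bigr] \le \bigl(\Ep[\max_{i}\max_{j}Z_{ij}^{4}]\bigr)^{1/2}$, which matches the factor multiplying $(\log p)/n$. Assembling these gives precisely the claimed inequality with a universal constant.

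The main obstacle is sourcing (or, if necessary, proving inline) the maximal inequality in the displayed two-term form with a \emph{universal} constant and the correct $\sqrt{\log p}$ versus $\log p$ split; a naive union bound over $p^{2}$ coordinates after a Bernstein-type tail bound would work but needs some care to produce clean $L^{1}$ constants rather than high-probability statements, and one must be attentive that the variance proxy is the \emph{second moment} of $Z_{ij}Z_{ik}$ (not its variance, though they differ only by a harmless square term that is dominated). Everything after that maximal inequality is just Cauchy--Schwarz and bookkeeping. Since the lemma is stated with an unspecified universal $C$, I would simply cite the relevant moment inequality for maxima of sums of independent random variables and then carry out the two Cauchy--Schwarz reductions above; the details are routine and would be deferred to the appendix alongside the other technical lemma.
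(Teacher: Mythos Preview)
Your proposal is correct and mirrors the paper's approach: reduce $\hat\Delta$ to a maximum over $p^{2}$ centered sums, invoke a two-term maximal inequality (variance term $\sim\sigma\sqrt{\log p}$, envelope term $\sim M\log p$), and then apply Cauchy--Schwarz/H\"older to reach the stated form. The only differences are cosmetic: the paper proves the needed maximal inequality inline (its Lemmas~\ref{lem: maximal ineq} and~\ref{lem: maximal ineq nonnegative}, via symmetrization plus a bootstrap on $\Ep[\max_{j}\sum_{i}Z_{ij}^{2}]$) rather than citing it, and its envelope term appears directly as $\sqrt{\Ep[M^{2}]}$---so applying it with the $p^{2}$-dimensional vectors $(Z_{ij}Z_{ik})_{j,k}$ yields $(\Ep[\max_{i,j}Z_{ij}^{4}])^{1/2}$ immediately, without your separate Cauchy--Schwarz step on $\Ep[\max_{i,j}Z_{ij}^{2}]$.
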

\begin{proof}
See the Appendix.
\end{proof}
Hence with help of Lemma 2.2.2 in \cite{VW96}, we can find various primitive conditions under which (\ref{eq: growth}) holds.

\begin{example} Consider the following examples.  Here for the sake of simplicity, we use the worst case bound $\Ep [ \max_{1 \leq j \leq p} T_{n,j} ] \leq \sqrt{2 C_{1} \log p}$, so that conditions (\ref{eq: growth}) reduce to $\hat{\Delta} = o_{\Pr}((\log p)^{-2})$.

Case (a): Suppose that $\Ep [ \exp (| Z_{ij} |/ C_{1} ) ] \leq 2$ for all $1 \leq i \leq n$ and $1 \leq j \leq p$ for some $C_{1} > 0$.
In this case, it is not difficult to verify that $\hat{\Delta} = o_{\Pr}((\log p)^{-2})$ as soon as $\log p = o(n^{1/5})$.

Case (b): Another type of $Z_{ij}$ which arises in regression applications is of the form $Z_{ij} = \eps_{i} x_{ij}$ where $\eps_{i}$ are stochastic with $\Ep [ \epsilon_{i} ] = 0$ and $\max_{1 \leq i \leq n}\Ep[ | \eps_{i} |^{4q} ] = O(1)$ for some $q \geq 1$, and $x_{ij}$ are non-stochastic (typically, $\eps_{i}$ are ``errors'' and $x_{ij}$ are ``regressors'').  Suppose that $x_{ij}$ are normalized in such a way that $n^{-1} \sum_{i=1}^{n} x_{ij}^{2} = 1$, and there are bounds $B_{n} \geq 1$ such that $\max_{1 \leq i \leq n} \max_{1 \leq j \leq p} | x_{ij} | \leq B_{n}$, where we allow $B_{n} \to \infty$. In this case, $\hat{\Delta} = o_{\Pr}((\log p)^{-2})$ as soon as
\begin{equation*}
\max \{ B_{n}^{2} (\log p)^{5},  B_{n}^{4q/(2q-1)} (\log p)^{6q/(2q-1)}  \} = o(n),
\end{equation*}
since $\max_{1 \leq j \leq p} ( n^{-1}\sum_{i=1}^{n} \Ep [ (\eps_{i}  x_{ij})^{4} ]) \leq B_{n}^{2} \max_{1 \leq i \leq n} \Ep [ \eps_{i}^{4} ] = O(B_{n}^{2})$ and $\Ep [ \max_{1 \leq i \leq n} \max_{1 \leq j \leq p} (\eps_{i} x_{ij})^{4} ] \leq B_{n}^{4} \Ep[\max_{1 \leq i \leq n} \eps_{i}^{4} ] = O(n^{1/q} B_{n}^{4})$.

Importantly, in these examples, for (\ref{eq: cmclt}) to hold, $p$ can increase  exponentially in some fractional power of $n$.
\end{example}

\section{Anti-concentration Bounds}
\label{sec: anti-concentration}

The following theorem provides bounds on the {\em L\'{e}vy concentration function}
of the maximum of a Gaussian random vector in $\RR^{p}$, where the terminology is borrowed from \cite{RV09}.

\begin{definition}[\cite{RV09}, Definition 3.1]
\label{def: levy concentration}
The {\em L\'{e}vy concentration function} of a real valued random variable $\xi$ is defined for $\epsilon > 0$ as
\begin{equation*}
\LL (\xi,\epsilon) = \sup_{x \in \RR} \Pr ( | \xi - x | \leq \epsilon ).
\end{equation*}
\end{definition}

% Here is the main theorem of this section.

 \begin{theorem}[Anti-concentration]\label{thm: anticoncentration}
Let $(X_{1},\dots,X_{p})^{T}$ be a  centered Gaussian random vector in $\RR^{p}$ with $\sigma_{j}^{2} := \Ep [ X_{j}^{2} ]>0$ for all $1 \leq j \leq p$. Moreover, let
$\underline{\sigma} := \min_{1 \leq j \leq p} \sigma_{j}, \overline{\sigma} := \max_{1 \leq j \leq p} \sigma_{j}$, and $a_{p} := \Ep [ \max_{1 \leq j \leq p} (X_{j}/\sigma_{j}) ]$.

(i) If the variances are all equal, namely $\underline{\sigma} = \overline{\sigma} = \sigma$, then for every $\epsilon > 0$,
\begin{equation*}
\LL (\max_{1 \leq j \leq p} X_{j},\epsilon) \leq 4 \epsilon (a_p + 1)/\sigma.
\end{equation*}

(ii)  If the variances are not equal, namely $\underline{\sigma} < \overline{\sigma}$, then for every $\epsilon > 0$,
\begin{equation*}
\LL (\max_{1 \leq j \leq p} X_{j},\epsilon)  \leq C \epsilon \{ a_p + \sqrt{1 \vee \log (\underline{\sigma}/\epsilon)} \},
\end{equation*}
where $C>0$ depends only on $\underline{\sigma}$ and $\overline{\sigma}$.
\end{theorem}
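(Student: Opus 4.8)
The plan is to replace $\max_j X_j$ by the smooth surrogate $F_\beta$, bound the density of $F_\beta(X)$ uniformly over large $\beta$, and then let $\beta\to\infty$. Two structural facts about $F_\beta$ drive everything: $\nabla F_\beta(z)=\pi(z):=\bigl(e^{\beta z_1},\dots,e^{\beta z_p}\bigr)\big/\sum_k e^{\beta z_k}$ is a probability vector -- so $F_\beta$ is $1$-Lipschitz in $\ell_\infty$ and $\nabla F_\beta(z)^{T}\Sigma\,\nabla F_\beta(z)\le\bar\sigma^{2}$ for $X\sim N(0,\Sigma)$ -- and $\nabla^{2}F_\beta(z)=\beta\bigl(\operatorname{diag}\pi(z)-\pi(z)\pi(z)^{T}\bigr)$ equals $\beta$ times the covariance of the Gibbs law $\pi(z)$, hence is small precisely when $\pi(z)$ concentrates on a single coordinate. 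Since $\nabla F_\beta$ never vanishes, $F_\beta(X)$ has a density, and \eqref{eq: smooth max property} keeps $\Ep[F_\beta(X)]\le\Ep[\max_j X_j]+\beta^{-1}\log p$ bounded as $\beta\to\infty$.

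For the equal-variance case (i), take $\sigma=1$. Gaussian integration by parts (applied to a mollification of an indicator) gives, for any vector field $V$ with $\langle V,\nabla F_\beta\rangle\equiv1$,
\[
f_{F_\beta(X)}(w)=-\,\Ep\Bigl[\mathbf{1}\{F_\beta(X)\ge w\}\,\bigl(\operatorname{div}V(X)-\langle V(X),\Sigma^{-1}X\rangle\bigr)\Bigr].
\]
The naive choice $V=\mathbf{1}$ reduces this to $\Ep[\mathbf{1}\{F_\beta(X)\ge w\}\,\mathbf{1}^{T}\Sigma^{-1}X]$, which by Cauchy--Schwarz costs the dimensional factor $(\mathbf{1}^{T}\Sigma^{-1}\mathbf{1})^{1/2}\asymp\sqrt p$ -- too large. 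Instead I would pick $V$ adapted to the level sets of $F_\beta$, built from $\Sigma\pi$, so that $\langle V,\Sigma^{-1}X\rangle=\langle\pi(X),X\rangle\big/\langle\pi(X),\Sigma\pi(X)\rangle$. On the event that the maximizing coordinate of $X$ is well separated -- of probability $1-o(1)$ once $\beta$ exceeds a multiple of $\sqrt{\log p}$, since then $\pi(X)\approx e_{j^{*}}$ and $\operatorname{div}V$, which involves $\nabla^{2}F_\beta$, is tiny -- both $\operatorname{div}V(X)$ and $\langle V(X),\Sigma^{-1}X\rangle$ are $O\bigl(\max_j(X_j/\sigma_j)\bigr)$, while the complementary rare event is absorbed by crude bounds. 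Combining this with the sub-Gaussian concentration of $F_\beta(X)$ about its mean (variance proxy $\le\bar\sigma^{2}=1$) and $\Ep[F_\beta(X)]\le a_p+\beta^{-1}\log p$ should yield $\sup_w f_{F_\beta(X)}(w)\le 2(a_p+1)$ uniformly in large $\beta$; letting $\beta\to\infty$ bounds the density of $\max_j X_j$, and then $\LL(\max_j X_j,\epsilon)\le 2\epsilon\,\|f_{\max_j X_j}\|_\infty$ gives (i), with the constant tracked to $4$.

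For the unequal-variance case (ii) I would reduce to (i) by a scaling argument, grouping the coordinates according to the size of $\sigma_j$ and rescaling; when the variances differ one no longer gets a clean uniform density bound, and the proof incurs an extra $\sqrt{1\vee\log(\underline\sigma/\epsilon)}$ factor (roughly from truncating the range at that scale, beyond which $\Pr(z\le\max_j X_j\le z+\epsilon)$ is already $\lesssim\epsilon$ by a Gaussian tail bound) and a constant depending only on $\underline\sigma,\bar\sigma$. Degenerate configurations -- $\Sigma$ singular, or some $X_j$ a.s.\ equal -- are handled separately by passing to the support of $X$ and to the distinct coordinates. The crux is the density estimate for $F_\beta(X)$: making the $p$-dependence enter only through $a_p$ (equivalently through $\log p$) rather than through the $\sqrt p$ of a one-line Cauchy--Schwarz, which is exactly where the Gibbs/covariance form of $\nabla^{2}F_\beta$, the identity relating $\Ep[F_\beta(X)]$ to $a_p$, and the Gaussian concentration of $F_\beta(X)$ have to be combined; I expect the final bookkeeping of constants to require care as well.
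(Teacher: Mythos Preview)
Your approach via the smooth max $F_\beta$ and Gaussian integration by parts is genuinely different from the paper's, but there is a real gap at precisely the step you flag as ``the crux.''

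The paper never touches $F_\beta$ in this proof. Instead it (a) reduces to unit variance by the affine change $W_j:=(X_j-x)/\sigma_j+x/\underline\sigma$, which forces $\Ep[W_j]\ge 0$; (b) invokes Ylvisaker's explicit density formula for $\max_j W_j$, namely $f(z)=\phi(z)\,G_p(z)$ with $G_p(z)=\sum_j e^{\mu_j z-\mu_j^2/2}\Pr(W_k\le z,\ \forall k\ne j\mid W_j=z)$; (c) shows---this is the key lemma---that $G_p$ is \emph{non-decreasing}, whence $G_p(z)(1-\Phi(z))\le\Pr(\max_j W_j>z)$; and (d) bounds the tail by Borell's concentration inequality and closes with Mill's ratio, obtaining $f(z)\le 2(z\vee 1)\exp\{-(z-\bar z-a_p)_+^2/2\}$. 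No smoothing parameter, no divergence terms, no limits in $\beta$; the monotonicity of $G_p$ is exactly what makes the $p$-dependence enter only through $a_p$.

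In your scheme, with $V=\Sigma\pi/\langle\pi,\Sigma\pi\rangle$, the divergence is $\beta$ times a combination of $\mathrm{tr}(\Sigma W)/\langle\pi,\Sigma\pi\rangle$ and $\langle\Sigma\pi,W\Sigma\pi\rangle/\langle\pi,\Sigma\pi\rangle^{2}$, where $W=\mathrm{diag}(\pi)-\pi\pi^{T}$. You assert this is tiny on the well-separated event and that the complement is ``absorbed by crude bounds,'' but neither claim is substantiated. First, $\langle\pi,\Sigma\pi\rangle$ has no universal lower bound: with negatively correlated coordinates it can be arbitrarily small whenever $\pi$ is spread over two or more of them, so on the bad event both the divergence and the drift $\langle\pi,X\rangle/\langle\pi,\Sigma\pi\rangle$ blow up rather than merely fail to be small. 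Second, to absorb the bad event you need $\beta\cdot\Pr(\text{top two }X_j\text{'s within }O(1/\beta))$ to be $O(a_p)$ uniformly in $\Sigma$; but that quantity is essentially the density of the spacing at zero, which is the very type of anti-concentration information you are trying to establish---the argument threatens to be circular. This is exactly the obstruction that forces Malliavin-type density formulas (Nourdin--Viens, which the paper discusses) to assume positive covariances and to pay the inverse of their minimum; your sketch does not explain how to avoid that dependence. Finally, the reduction you propose for part~(ii) (``grouping by size of $\sigma_j$'') is not what the paper does either: the $x$-dependent shift $W_j=(X_j-x)/\sigma_j+x/\underline\sigma$ is what produces the extra $\sqrt{1\vee\log(\underline\sigma/\epsilon)}$ cleanly, via a single application of the Gaussian tail bound at the truncation level.
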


Since $X_{j}/\sigma_{j} \sim N(0,1)$, by a standard calculation, we have $a_{p} \leq \sqrt{2 \log p}$ in the worst case
(see, for example, \cite{Talagrand2003}, Proposition 1.1.3), so that
the following simpler corollary  follows immediately from Theorem \ref{thm: anticoncentration}.
%This corollary will be used in the proof of Theorem \ref{cor: distances Gaussian to Gaussian}.

 \begin{corollary}\label{cor: anticoncentration}
Let $(X_{1},\dots,X_{p})^{T}$ be a  centered Gaussian random vector in $\RR^{p}$ with $\sigma_{j}^{2} := \Ep [ X_{j}^{2} ]  > 0$ for all $1 \leq j \leq p$.
Let $\underline{\sigma} := \min_{1 \leq j \leq p} \sigma_{j}$ and $\overline{\sigma} := \max_{1 \leq j \leq p} \sigma_{j}$. Then for every $\epsilon > 0$,
\begin{equation*}
\LL (\max_{1 \leq j \leq p} X_{j},\epsilon)  \leq C \epsilon  \sqrt{1 \vee \log (p/\epsilon )},
\end{equation*}
where $C>0$ depends only on $\underline{\sigma}$ and $\overline{\sigma}$.
When $\sigma_{j}$ are all equal,  $\log (p/\epsilon)$ on the right side can be replaced by $\log p$.
\end{corollary}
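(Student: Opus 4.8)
The plan is to reduce the corollary to Theorem~\ref{thm: anticoncentration} by supplying a dimension-explicit bound on $a_p = \Ep[\max_{1 \le j \le p}(X_j/\sigma_j)]$. Each $X_j/\sigma_j$ is a standard Gaussian random variable, so the classical maximal inequality for (not necessarily independent) Gaussian maxima applies: by Jensen's inequality and a union-type bound, for any $t > 0$, $e^{t a_p} \le \Ep[e^{t \max_{1 \le j \le p}(X_j/\sigma_j)}] \le \sum_{j=1}^{p} \Ep[e^{t X_j/\sigma_j}] = p\, e^{t^2/2}$, hence $a_p \le t^{-1}\log p + t/2$, and optimizing over $t$ gives $a_p \le \sqrt{2\log p}$. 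With this in hand, everything else is elementary bookkeeping of constants.

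First I would dispose of the regime $\epsilon > 1$: since the L\'evy concentration function is always at most $1$ while $C\epsilon\sqrt{1 \vee \log(p/\epsilon)} \ge C\epsilon > C$, the asserted bound holds for any $C \ge 1$, so I may assume $\epsilon \le 1$. In that regime $\log(p/\epsilon) = \log p + \log(1/\epsilon) \ge \log p \ge \log 2 > 0$, so $1 \vee \log(p/\epsilon) = \log(p/\epsilon)$ and $\sqrt{2\log p} \le \sqrt{2}\,\sqrt{\log(p/\epsilon)}$. In the equal-variance case, Theorem~\ref{thm: anticoncentration}(i) then gives $\LL(\max_{1 \le j \le p} X_j, \epsilon) \le 4\epsilon(a_p + 1)/\sigma \le 4\epsilon(\sqrt{2\log p} + 1)/\sigma$, and using $1 \le \sqrt{\log p}/\sqrt{\log 2}$ (valid since $p \ge 2$) this is at most $C\epsilon\sqrt{\log p}$ with $C$ depending only on $\sigma$; the last inequality in fact holds for all $\epsilon > 0$, which yields the ``$\log p$'' form of the corollary in the equal-variance case, and a fortiori the ``$\log(p/\epsilon)$'' form.

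For the unequal-variance case (still with $\epsilon \le 1$), Theorem~\ref{thm: anticoncentration}(ii) gives $\LL(\max_{1\le j\le p} X_j,\epsilon) \le C\epsilon\{a_p + \sqrt{1 \vee \log(\underline\sigma/\epsilon)}\}$ with $C$ depending only on $\underline\sigma,\bar\sigma$. Here I would bound $a_p \le \sqrt2\,\sqrt{\log(p/\epsilon)}$ as above, and $\log(\underline\sigma/\epsilon) = \log(p/\epsilon) + \log(\underline\sigma/p) \le \log(p/\epsilon) + (\log\bar\sigma)_{+} \le (1 + (\log\bar\sigma)_{+}/\log 2)\log(p/\epsilon)$, using $\underline\sigma \le \bar\sigma$ and $\log(p/\epsilon) \ge \log 2$; hence $\sqrt{1 \vee \log(\underline\sigma/\epsilon)} \le \sqrt{1 + (\log\bar\sigma)_{+}/\log 2}\,\sqrt{\log(p/\epsilon)}$. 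Substituting these two estimates yields $\LL(\max_{1\le j\le p} X_j,\epsilon) \le C'\epsilon\sqrt{\log(p/\epsilon)} = C'\epsilon\sqrt{1 \vee \log(p/\epsilon)}$ with $C'$ depending only on $\underline\sigma,\bar\sigma$, and enlarging $C'$ to be at least $1$ absorbs the $\epsilon > 1$ regime handled in the previous paragraph.

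The whole argument is routine given Theorem~\ref{thm: anticoncentration}; the only step that needs attention is the unequal-variance case, where one must check that the extra term $\sqrt{1 \vee \log(\underline\sigma/\epsilon)}$ from part (ii) folds into $\sqrt{1 \vee \log(p/\epsilon)}$ uniformly in $\epsilon$ with a constant depending only on $\underline\sigma$ and $\bar\sigma$ — which is precisely why it is convenient to separate off the trivial large-$\epsilon$ regime first, so that $\log(p/\epsilon)$ stays bounded below by $\log 2$.
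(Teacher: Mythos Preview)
Your proposal is correct and follows exactly the paper's approach: bound $a_p \le \sqrt{2\log p}$ via the standard Gaussian maximal inequality and then invoke Theorem~\ref{thm: anticoncentration}; the paper's own proof is literally one sentence to this effect, and your write-up just supplies the constant-tracking that the paper omits. One harmless slip: the claim ``$1 \vee \log(p/\epsilon) = \log(p/\epsilon)$'' fails when $p/\epsilon < e$ (e.g.\ $p=2$, $\epsilon$ near $1$), but since you only need the inequality $\sqrt{\log(p/\epsilon)} \le \sqrt{1 \vee \log(p/\epsilon)}$ at the end, the conclusion is unaffected.
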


%\begin{proof}[Proof of Corollary \ref{cor: anticoncentration}]
%Since $X_{j}/\sigma_{j} \sim N(0,1)$, by a standard calculation, we have $a_{p} \leq \sqrt{2 \log p}$.
%See, for example, Proposition 1.1.3 of \cite{Talagrand2003}. Hence the corollary follows from Theorem \ref{thm: anticoncentration}.
%\end{proof}

\begin{remark}[Anti-concentration vs. small ball probabilities]
The problem of bounding the L\'{e}vy concentration function $\LL (\max_{1 \leq j \leq p} X_{j},\epsilon)$ is qualitatively different from
the problem of bounding $\Pr ( \max_{1 \leq j \leq p} | X_{j} | \leq x )$. For a survey on the latter problem, called the ``small ball problem'', we refer the reader to \cite{LS01}.
% We do not aware of any previous that implies Theorem \ref{thm: anticoncentration}.
\end{remark}

\begin{remark}[Concentration vs. anti-concentration]
\label{rem: concentration vs anticoncentration}
{\em Concentration inequalities} refer to inequalities bounding $\Pr (| \xi - x | > \epsilon )$ for a random variable $\xi$ (typically $x$ is the mean or median of $\xi$). See the monograph \cite{L01} for a study of the concentration of measure phenomenon.
{\em Anti-concentration inequalities} in turn refer to reverse inequalities, that is, inequalities bounding $\Pr (| \xi - x | \leq \epsilon )$. Theorem \ref{thm: anticoncentration} provides anti-concentration inequalities for $\max_{1 \leq j \leq p} X_{j}$.
\cite{VR07} remarked that ``concentration is better understood than anti-concentration''. In the present case, the Gaussian concentration inequality (see \cite{L01}, Theorem 7.1) states that
\begin{equation*}
\Pr ( | \max_{1 \leq j \leq p} X_{j} - \Ep [ \max_{1 \leq j \leq p} X_{j} ] | \geq r) \leq 2 e^{-r^{2}/(2 \overline{\sigma}^{2})}, \ r > 0,
\end{equation*}
where the mean can be replace by the median. This inequality is well known and dates back to \cite{B75} and \cite{ST78}.
To the best of our knowledge, however, the reverse inequalities in Theorem \ref{thm: anticoncentration} were not known and are new.
\end{remark}

\begin{remark}[Anti-concentration for maximum  of moduli, $\max_{1 \leq j \leq p} | X_{j} |$]
Versions of Theorem \ref{thm: anticoncentration} and Corollary \ref{cor: anticoncentration} continue to hold for $\max_{1 \leq j \leq p} | X_{j} |$.
That is, for example, when $\sigma_{j}$ are all equal ($\sigma_{j} = \sigma$), $\LL(\max_{1 \leq j \leq p} | X_{j} |, \epsilon ) \leq 4 ( a_{p}'+1)/\sigma$, where $a_{p}' := \Ep [ \max_{1 \leq j \leq p} | X_{j} |/\sigma ]$.  To see this, observe that $\max_{1 \leq j \leq p} | X_{j} | = \max_{1 \leq j \leq 2p} X_{j}'$ where $X_{j}'=X_{j}$ for $j=1,\dots,p$ and $X_{p+j}' = - X_{j}$ for $j=1,\dots,p$. Hence we may apply Theorem \ref{thm: anticoncentration} to $(X_{1}',\dots,X_{2p}')^{T}$ to obtain the desired conclusion.
\end{remark}

\begin{remark}[A sketch of the proof of Theorem \ref{thm: anticoncentration}]
For the reader's convenience, we provide a sketch of the proof of Theorem \ref{thm: anticoncentration}. We focus here on the simple case where all the variances are equal to one $(\sigma_{1}=\cdots=\sigma_{p}=1$). Then the distribution of $Z=\max_{1 \leq j \leq p} X_{j}$ is absolutely continuous and its density can be written as $\phi (z) G(z)$ where the map $z \mapsto G(z)$ is non-decreasing. Consequently, it is then not difficult to see that $G(z) \leq \Pr(Z > z)/\{ 1- \Phi(z) \} \leq 2 (z \vee 1) e^{-(z-a_{p})_{+}^{2}/2}/\phi(z)$, where the second inequality follows from Mill's inequality combined with  the Gaussian concentration inequality. Hence the density of $Z$ is bounded by $2 (z \vee 1) e^{-(z-a_{p})_{+}^{2}/2}$, which immediately leads to the bound $\LL (\max_{1 \leq j \leq p} X_{j},\epsilon) \leq 4 (a_{p}+1)\epsilon$.
\end{remark}

%The main feature of Theorem \ref{thm: anticoncentration} is the fact that it provides {\em dimension-free} bounds on the L\'{e}vy concentration function $\LL (\max_{1 \leq j \leq p} X_{j},\epsilon)$.
In a trivial example where $p=1$, it is immediate to see that $\Pr ( | X_{1} - x|  \leq   \epsilon  ) \leq \epsilon \sqrt{2/(\pi \sigma_{1}^{2})}$.
A non-trivial case is the situation where $p \to \infty$. In such a case, it is typically not known whether $\max_{1 \leq j \leq p} X_{j}$ has a limiting distribution as $p \to \infty$, even after normalization (recall that except for $\underline{\sigma} > 0$, we allow for general covariance structures between $X_{1},\dots,X_{p}$), and therefore it is not trivial at all whether,
for every sequence $\epsilon = \epsilon_{p} \to 0$ (or at some rate), $\LL (\max_{1 \leq j \leq p} X_{j},\epsilon) \to 0$ or how fast $\epsilon = \epsilon_{p} \to 0$ should be to guarantee that $\LL (\max_{1 \leq j \leq p} X_{j},\epsilon) \to 0$.
Theorem \ref{thm: anticoncentration} answers this question with explicit, non-asymptotic bounds.

 Importantly, the bounds in Theorem \ref{thm: anticoncentration} are {\em dimension-free} in the sense that, similar to the Gaussian concentration inequality, they depend on the dimension $p$ only through $a_{p}$ -- the expectation of the maximum of the (normalized) Gaussian random vector. Hence Theorem \ref{thm: anticoncentration} admits direct extensions to the infinite dimensional case, namely separable Gaussian processes, as long as the corresponding expectation is finite. See our companion paper \cite{CCK13} for formal treatments and applications of this extension.

The presence of $a_{p}$ on the bounds is essential and can not be removed in general, as the following example suggests.
This shows that there does not exist a substantially sharper estimate of the universal bound of the concentration function than that given in Theorem \ref{thm: anticoncentration}.
Potentially, there could be refinements but they would have to rely on the particular (hence
non-universal) features of the covariance structure between $X_{1},\dots,X_{p}$.

\begin{example}[Partial converse of Theorem \ref{thm: anticoncentration}]
\label{ex1}

Let $X_{1},\dots,X_{p}$ be independent standard Gaussian random variables.
By Theorem 1.5.3 of \cite{Leadbetter1983},  as $p \to \infty$,
\begin{equation}
b_{p} (\max_{1 \leq j \leq p}X_{j} - d_{p}) \stackrel{d}{\to} G(0,1), \label{weak}
\end{equation}
where
\begin{equation*}
b_{p} := \sqrt{2 \log p},   \ \ d_{p} := b_{p} - \frac{ \log(4 \pi) + \log \log p }{2 b_{p}},
\end{equation*}
and $G(0,1)$ denotes the standard Gumbel distribution, that is, the distribution having the density $g(x) = e^{-x} e^{-e^{-x}}$ for $x \in \mathbb{R}$.
In fact, we can show that the density of $b_{p} (\max_{1 \leq j \leq p}X_{j} - d_{p})$ converges to that of $G(0,1)$ locally uniformly.
To see this, we begin with noting that the density of $b_{p} (\max_{1 \leq j \leq p}X_{j} - d_{p})$ is given by
\begin{equation*}
g_{p}(x) = \frac{p}{b_{p}}\phi(d_{p} + b_{p}^{-1}x) [\Phi(d_{p}+b_{p}^{-1}x)]^{p-1}.
\end{equation*}
Pick any $x \in \mathbb{R}$. Since, by the weak convergence result (\ref{weak}),
\begin{equation*}
[\Phi(d_{p}+b_{p}^{-1}x)]^p =\Pr(b_{p} (\max_{1 \leq j \leq p}X_{j} - d_{p})\leq x) \to e^{-e^{-x}}, \ p \to \infty,
\end{equation*}
we have $[\Phi(d_{p}+b_{p}^{-1}x)]^{p-1} \to e^{-e^{-x}}$. Hence it remains to show that
\begin{equation*}
\frac{p}{b_{p}}\phi(d_{p} + b_{p}^{-1}x) \to e^{-x}.
\end{equation*}
Taking the logarithm of the left side yields
\begin{equation}
\log p -\log b_{p} - \log (\sqrt{2\pi}) - (d_{p}+b_{p}^{-1}x)^{2}/2. \label{log}
\end{equation}
Expanding $(d_{p}+b_{p}^{-1}x)^{2}$ gives that
\begin{equation*}
d_{p}^{2} + 2 d_{p} b_{p}^{-1}x + b_{p}^{-2}x^{2} = b_{p}^{2} - \log \log p - \log (4\pi) + 2x + o(1), \ p \to \infty,
\end{equation*}
by which we have $(\ref{log}) = -x + o(1)$. This shows that $g_{p}(x) \to g(x)$ for all $x \in \RR$.
Moreover, this convergence takes place locally uniformly in $x$, that is, for every $K > 0$, $g_{p}(x) \to g(x)$ uniformly in $x \in [-K, K]$.

On the other hand, the density of $\max_{1 \leq j \leq p} X_{j}$ is given by $f_{p}(x) = p \phi(x) [ \Phi(x) ]^{p-1}$.
By this form, for every $K > 0$, there exist a constant $c > 0$ and a positive integer $p_{0}$ depending only on $K$ such that for $p \geq p_{0}$,
\begin{equation*}
\inf_{x \in [ d_{p} - K b_{p}^{-1}, d_{p} +K b_{p}^{-1}]} b_{p}^{-1} f_{p}(x) = \inf_{x \in [-K,K]} g_{p}(x) \geq \inf_{x \in [-K,K]} g(x) + o(1) \geq c,
\end{equation*}
which shows that for $p \geq p_{0}$,
\begin{equation*}
f_{p}(x) \geq c b_{p}, \ \forall x \in [ d_{p} - Kb_{p}^{-1}, d_{p} +K b_{p}^{-1}].
\end{equation*}
Therefore, we conclude that for $p \geq p_{0}$,
\begin{equation*}
\Pr(|\max_{1 \leq j \leq p} X_{j} - d_{p} | \leq \epsilon) = \int_{d_{p}-\epsilon}^{d_{p}+\epsilon} f_{p}(x) dx \geq 2 c \epsilon b_{p}, \ \forall \epsilon \in [ 0, K b_{p}^{-1} ].
\end{equation*}

By the Gaussian maximal inequality and Lemma 2.3.15 of \cite{D99}, we have
\begin{equation*}
\sqrt{\log p}/12  \leq \mathbb{E}[ \max_{1 \leq j \leq p} X_{j}] \leq \sqrt{2 \log p}.
\end{equation*}
Hence,  by the previous result, for every  $K' > 0$, there exist a constant $c' > 0$ and and a positive integer $p_{0}'$ depending only on $K'$ such that for $p \geq p_{0}'$,
$a_{p} \geq 1$ and
\begin{equation*}
\LL (\max_{1 \leq j \leq p} X_{j},\epsilon) \geq \Pr(|\max_{1 \leq j \leq p} X_{j} - d_{p} | \leq \epsilon) \geq c' \epsilon a_{p}, \ \forall \epsilon \in [0, K'a^{-1}_{p} ].
\end{equation*}
\qed
\end{example}

\section{Proofs for Section \ref{sec: Gaus and multiplier}}
\label{sec: proof for section 2}

\subsection{Proof of Theorem  \ref{theorem:comparison}}

Here for a smooth function $f: \RR^{p} \to \RR$, we write $\partial_{j} f (z) = \partial f(z)/\partial z_{j}$ for $z = (z_{1},\dots,z_{p})^{T}$.
We shall use the following version of Stein's identity.

\begin{lemma}[Stein's identity]
\label{lemma: talagrand}
Let $W =(W_{1},\dots,W_{p})^{T}$ be a centered Gaussian random vector in $\RR^{p}$.
Let $f: \RR^{p} \to \RR$ be a $C^1$-function such that $\Ep [ |\partial_{j} f(W) | ] < \infty$ for all $1 \leq j \leq p$. Then for every $1 \leq j \leq p$,
\begin{equation*}
\Ep[W_{j}f(W)]=\sum_{k=1}^p\Ep[W_{j} W_{k}] \Ep [\partial_k f (W)].
\end{equation*}
\end{lemma}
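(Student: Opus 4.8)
The plan is to reduce the identity to the standard Gaussian case $Z\sim N(0,I_p)$ and then, coordinate by coordinate, to the classical one-dimensional Gaussian integration-by-parts formula.

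First I would write $\Sigma:=\Ep[WW^T]$ and choose a $p\times p$ matrix $A$ with $AA^T=\Sigma$ (for instance $A=\Sigma^{1/2}$), so that $W$ has the same law as $AZ$ with $Z=(Z_1,\dots,Z_p)^T\sim N(0,I_p)$; thus we may assume $W=AZ$. Setting $g(z):=f(Az)$, which is again $C^1$, the chain rule gives $\partial_i g(z)=\sum_{k=1}^p A_{ki}\,\partial_k f(Az)$, hence $\Ep[|\partial_i g(Z)|]\le\sum_k|A_{ki}|\,\Ep[|\partial_k f(W)|]<\infty$. Granting the standard-case identity $\Ep[Z_i g(Z)]=\Ep[\partial_i g(Z)]$ and using $W_j=\sum_i A_{ji}Z_i$, I would then compute
\[
\Ep[W_j f(W)]=\sum_i A_{ji}\,\Ep[Z_i g(Z)]=\sum_i A_{ji}\sum_k A_{ki}\,\Ep[\partial_k f(W)]=\sum_{k=1}^p\Sigma_{jk}\,\Ep[\partial_k f(W)],
\]
which is the assertion since $\Sigma_{jk}=\Ep[W_jW_k]$. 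Note that this reduction uses only $AA^T=\Sigma$, so a possibly degenerate $\Sigma$ needs no separate treatment.

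For the standard case, fixing $j$ and conditioning on $(Z_k)_{k\ne j}$ reduces matters to the one-dimensional statement: for $h\in C^1(\RR)$ with $\Ep[|h'(\zeta)|]<\infty$, $\zeta\sim N(0,1)$, one has $\Ep[\zeta h(\zeta)]=\Ep[h'(\zeta)]$; applied conditionally (the conditional integrability of $h'$ holds almost surely because the unconditional one does) and then integrated out, this yields $\Ep[Z_j f(Z)]=\Ep[\partial_j f(Z)]$. To prove the one-dimensional identity I would use $z\phi(z)=-\phi'(z)$ together with the representation $h(z)=h(0)+\int_0^z h'(t)\,dt$, split $\int_{\RR}zh(z)\phi(z)\,dz$ at the origin, and apply Tonelli's theorem (legitimate because $\int_{\RR}|h'(t)|\phi(t)\,dt<\infty$) in combination with $\int_t^\infty z\phi(z)\,dz=\phi(t)$; the two boundary contributions $\pm h(0)\phi(0)$ cancel and what remains is $\int_{\RR}h'(t)\phi(t)\,dt=\Ep[h'(\zeta)]$. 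The same estimate incidentally shows $\Ep[|\zeta h(\zeta)|]<\infty$, so every expectation above is well defined.

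The main obstacle is the rigor of this last step: under the mere hypothesis $\Ep[|h'(\zeta)|]<\infty$ one cannot simply integrate by parts and discard boundary terms, since $h$ itself need not vanish at infinity. Handling the integral by splitting at $0$ and invoking Tonelli's theorem (rather than a boundary-term argument) is precisely the device that makes the proof go through at this level of generality; the rest is bookkeeping.
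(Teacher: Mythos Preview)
Your argument is correct and is in fact the standard route to the multivariate Stein identity: write $W=AZ$ with $Z\sim N(0,I_p)$, reduce to the i.i.d.\ standard normal case via the chain rule and $AA^{T}=\Sigma$, then reduce that to the one-dimensional identity $\Ep[\zeta h(\zeta)]=\Ep[h'(\zeta)]$ by conditioning. Your treatment of the one-dimensional step---writing $h(z)=h(0)+\int_0^z h'(t)\,dt$, using $\int_t^\infty z\phi(z)\,dz=\phi(t)$, and invoking Fubini/Tonelli instead of a boundary-term argument---is the right way to handle the minimal hypothesis $\Ep[|h'(\zeta)|]<\infty$, and the same computation with $|h'|$ in place of $h'$ indeed yields $\Ep[|\zeta h(\zeta)|]<\infty$.

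The paper itself does not give a proof: it simply refers the reader to Section~A.6 of Talagrand's \emph{Spin Glasses}, to Chen--Goldstein--Shao, and to Stein's original paper. So there is nothing to compare at the level of method; you have supplied a self-contained argument where the paper defers to the literature. The only point one might flag for full rigor is that in passing from the one-dimensional identity to $\Ep[Z_j g(Z)]=\Ep[\partial_j g(Z)]$ you implicitly need $\Ep[|Z_j g(Z)|]<\infty$ to identify the iterated integral with the joint expectation; this follows by iterating your one-dimensional bound across coordinates (or, more simply, by first proving the identity for compactly supported $f$ and then approximating), but it is worth making explicit if you want the argument airtight under the bare hypothesis stated.
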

\begin{proof}[Proof of Lemma \ref{lemma: talagrand}]
See Section A.6 of \cite{Talagrand2003}; also \cite{ChenGoldsteinShao2011} and \cite{Stein1981}.
\end{proof}

We will use the following properties of the smooth max function.

\begin{lemma}
\label{lemma: smooth max property}
For every $1 \leq j,k \leq p$,
\begin{equation*}
 \partial_{j} F_{\beta}(z) = \pi_{j}(z), \quad \partial_{j} \partial_k F_{\beta}(z) = \beta w_{jk} (z),
\end{equation*}
where
\begin{equation*}
\pi_{j}(z)  :=    e^{\beta z_{j}}/{\textstyle \sum}_{m=1}^pe^{\beta z_m}, \ w_{jk}(z) := 1 (j = k) \pi_{j} (z)- \pi_{j}(z) \pi_{k} (z).
\end{equation*}
% where $(z)$ denotes evaluation at $z$.
Moreover,
\begin{equation*}
\pi_{j}(z) \geq 0,  \  {\textstyle \sum}_{j=1}^p \pi_{j} (z) = 1, \ {\textstyle \sum}_{j,k=1}^p |w_{jk}(z)|  \leq 2.
\end{equation*}
\end{lemma}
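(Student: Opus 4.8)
The plan is to obtain both identities by direct differentiation — the first derivative from the chain rule, the second from the quotient rule — and then to read off the three elementary properties of $\pi_j$ and $w_{jk}$ by summation.

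First I would differentiate $F_{\beta}(z) = \beta^{-1}\log\bigl(\sum_{m=1}^{p} e^{\beta z_{m}}\bigr)$ in the direction $z_{j}$. By the chain rule the outer $\beta^{-1}$ and the factor $\beta$ produced by $\partial_{j} e^{\beta z_{j}}$ cancel, leaving $\partial_{j} F_{\beta}(z) = e^{\beta z_{j}}/\sum_{m=1}^{p} e^{\beta z_{m}} = \pi_{j}(z)$, which is the first claim. Next I would differentiate $\pi_{j}(z) = e^{\beta z_{j}}/\sum_{m} e^{\beta z_{m}}$ with respect to $z_{k}$ by the quotient rule: the numerator contributes $1(j=k)\,\beta e^{\beta z_{j}}$ and the derivative of the denominator contributes $\beta e^{\beta z_{k}}$; collecting terms and dividing by $\sum_{m} e^{\beta z_{m}}$ (respectively its square) gives $\partial_{k}\pi_{j}(z) = \beta\bigl(1(j=k)\pi_{j}(z) - \pi_{j}(z)\pi_{k}(z)\bigr) = \beta w_{jk}(z)$, which combined with the first identity yields $\partial_{j}\partial_{k} F_{\beta}(z) = \beta w_{jk}(z)$.

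For the remaining three properties: $\pi_{j}(z) \geq 0$ is immediate since all exponentials are positive, and $\sum_{j=1}^{p}\pi_{j}(z) = 1$ because the numerators sum to the denominator. For $\sum_{j,k=1}^{p}|w_{jk}(z)| \leq 2$ I would split the double sum into its diagonal and off-diagonal parts. On the diagonal, $\sum_{j}|\pi_{j}(z) - \pi_{j}(z)^{2}| = \sum_{j}\pi_{j}(z)(1-\pi_{j}(z)) = 1 - \sum_{j}\pi_{j}(z)^{2}$, using that $0 \leq \pi_{j}(z) \leq 1$ to drop the absolute value. Off the diagonal, $\sum_{j\neq k}\pi_{j}(z)\pi_{k}(z) = \bigl(\sum_{j}\pi_{j}(z)\bigr)^{2} - \sum_{j}\pi_{j}(z)^{2} = 1 - \sum_{j}\pi_{j}(z)^{2}$. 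Adding the two contributions gives $2\bigl(1 - \sum_{j}\pi_{j}(z)^{2}\bigr) \leq 2$.

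Every step here is an elementary calculus or algebra manipulation, so there is no genuine obstacle; the only points that warrant a moment's care are tracking the indicator $1(j=k)$ when applying the quotient rule and invoking $0 \leq \pi_{j}(z) \leq 1$ (a consequence of the already-established $\pi_{j} \geq 0$ and $\sum_{j}\pi_{j} = 1$) in order to remove the absolute values in the diagonal sum.
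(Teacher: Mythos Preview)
Your proof is correct and is precisely the direct calculation the paper alludes to; in fact the paper's own proof says only that the first property was noted by Chatterjee and that the rest follow from a direct calculation, so you have simply filled in the details the authors omitted.
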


\begin{proof}[Proof of Lemma \ref{lemma: smooth max property}]
The first property was noted in \cite{Chatterjee2005b}.  The other properties follow from a direct calculation.
\end{proof}

\begin{lemma}\label{lemma: second deriv m}
Let $m := g \circ F_{\beta}$ with $g \in C^{2}(\RR)$. Then for every $1 \leq j,k \leq p$,
\begin{align*}
\partial_{j} \partial_k m(z) =   (g'' \circ F_\beta) (z) \pi_{j} (z) \pi_{k} (z)+ \beta (g' \circ F_\beta) (z) w_{jk}(z),
\end{align*}
where $\pi_{j}$ and $w_{jk}$ are defined in Lemma \ref{lemma: smooth max property}.

\end{lemma}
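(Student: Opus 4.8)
The statement is a direct consequence of the chain rule and the product rule, using the derivative formulas for $F_\beta$ recorded in Lemma \ref{lemma: smooth max property}. The only things to check are that the relevant functions are differentiable enough to apply these rules (they are: $F_\beta$ is $C^\infty$ as a composition of $\log$, $\exp$ and finite sums, and $g \in C^2(\RR)$ means $g'$ is $C^1$), so there is no genuine obstacle — the work is a short two-line computation, and I would present it as such.

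\textbf{Step 1.} Compute the first partial derivatives of $m = g \circ F_\beta$. By the chain rule, for each $1 \le j \le p$,
\begin{equation*}
\partial_j m(z) = g'(F_\beta(z))\, \partial_j F_\beta(z) = (g' \circ F_\beta)(z)\, \pi_j(z),
\end{equation*}
where the last equality uses $\partial_j F_\beta(z) = \pi_j(z)$ from Lemma \ref{lemma: smooth max property}.

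\textbf{Step 2.} Differentiate the expression from Step 1 with respect to $z_k$ using the product rule:
\begin{equation*}
\partial_k \partial_j m(z) = \big[\partial_k (g' \circ F_\beta)(z)\big]\, \pi_j(z) + (g' \circ F_\beta)(z)\, \partial_k \pi_j(z).
\end{equation*}
For the first term, the chain rule gives $\partial_k (g' \circ F_\beta)(z) = g''(F_\beta(z))\, \partial_k F_\beta(z) = (g'' \circ F_\beta)(z)\, \pi_k(z)$. For the second term, since $\pi_j = \partial_j F_\beta$, Lemma \ref{lemma: smooth max property} yields $\partial_k \pi_j(z) = \partial_k \partial_j F_\beta(z) = \beta\, w_{jk}(z)$. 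Substituting these back gives
\begin{equation*}
\partial_j \partial_k m(z) = (g'' \circ F_\beta)(z)\, \pi_j(z)\pi_k(z) + \beta\,(g' \circ F_\beta)(z)\, w_{jk}(z),
\end{equation*}
which is exactly the claimed identity (using symmetry of mixed partials, $\partial_j \partial_k m = \partial_k \partial_j m$).

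\textbf{Main obstacle.} There is essentially none; the only point requiring a word of care is invoking the smoothness hypotheses so that all derivatives exist and the product/chain rules are legitimate. This lemma will then feed directly into the proof of Theorem \ref{theorem:comparison}, where one applies Stein's identity (Lemma \ref{lemma: talagrand}) together with the interpolation $Z(t) = \sqrt{t}\,X + \sqrt{1-t}\,Y$ and bounds $\sum_{j,k}|w_{jk}(z)| \le 2$ and $\sum_j \pi_j(z) = 1$ to control $\frac{d}{dt}\Ep[m(Z(t))]$.
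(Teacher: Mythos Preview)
Your proof is correct and is exactly the ``direct calculation'' the paper alludes to; the paper's own proof consists of the single sentence ``The proof follows from a direct calculation.'' Your two-step chain-rule/product-rule argument is precisely what is intended, so there is nothing to add.
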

\begin{proof}[Proof of lemma \ref{lemma: second deriv m}]
The proof follows from a direct calculation.
\end{proof}

\begin{proof}[Proof of Theorem \ref{theorem:comparison}]
Without loss of generality, we may assume that $X$ and $Y$ are independent, so that $\Ep[X_{j} Y_k]=0$ for all $1 \leq j,k \leq p$.
Consider the following Slepian interpolation between $X$ and $Y$:
\begin{equation*}
Z(t):=\sqrt{t} X+\sqrt{1-t} Y, \ t \in [0,1].
\end{equation*}
Let $m := g \circ F_\beta$ and $\Psi(t):=\Ep[m (Z(t))]$.
Then
\begin{equation*}
|\Ep[m(X)]-\Ep[ m(Y) ]|= | \Psi (1) - \Psi (0)| =\left |\int_{0}^{1} \Psi'(t) dt\right|.
\end{equation*}
Here we have
\begin{align*}
\Psi'(t) & = \frac{1}{2} \sum_{j=1}^p \Ep [ \partial_{j} m(Z(t)) (t^{-1/2}X_{j}-(1-t)^{-1/2}Y_{j}) ]  \\
& = \frac{1}{2} \sum_{j=1}^p \sum_{k=1}^p (\sigma_{jk}^{X}-\sigma_{jk}^{Y}) \Ep[\partial_{j} \partial_k m (Z(t)) ],
\end{align*}
where the second equality follows from applying Lemma \ref{lemma: talagrand} to $W = (t^{-1/2}X_{j}-(1-t)^{-1/2}Y_{j}, Z(t)^{T})^{T}$
and $f(W)=\partial_{j} m(Z(t))$. Hence
\begin{align*}
\left |\int_{0}^{1} \Psi'(t) dt\right|
&\leq \frac{1}{2} \sum_{j,k=1}^{p} | \sigma_{jk}^{X}-\sigma_{jk}^{Y} | \left | \int_0^1 \Ep [\partial_{j} \partial_k m(Z(t))] dt \right| \\
&\leq \frac{1}{2} \max_{1 \leq j,k \leq p} | \sigma_{jk}^{X}-\sigma_{jk}^{Y} | \int_0^1 \sum_{j,k=1}^{p} \left | \Ep[\partial_{j} \partial_k m(Z(t))]\right | dt   \\
&= \frac{\Delta}{2} \int_0^1 \sum_{j,k=1}^{p} \left | \Ep[\partial_{j} \partial_k m(Z(t))]\right | dt.
\end{align*}
By Lemmas \ref{lemma: smooth max property} and \ref{lemma: second deriv m},
\begin{equation*}
\sum_{j,k=1}^{p} |\partial_{j} \partial_k m(Z(t))|   \leq   | (g'' \circ F_{\beta})(Z(t))| + 2 \beta | (g' \circ F_{\beta})(Z(t))|.
\end{equation*}
Therefore, we have
\begin{align*}
&\left|\Ep[g(F_{\beta}(X))-g(F_{\beta}(Y))]\right| \\
&\leq  \Delta \times  \left \{  \frac{1}{2} \int_0^1 \Ep[ | (g'' \circ F_{\beta})(Z(t))| ] dt +  \beta \int_0^1 \Ep [| (g' \circ F_{\beta})(Z(t))| ]dt\right \}  \\
&\leq  \Delta ( \| g'' \|_{\infty}/2 + \beta \| g' \|_{\infty}),
\end{align*}
which leads to the first assertion. The second assertion follows from the inequality (\ref{eq: smooth max property}). This completes the proof.
\end{proof}

\subsection{Proof of Theorem \ref{cor: distances Gaussian to Gaussian}}
The final assertion follows from the inequality $a_{p} \leq \sqrt{2 \log p}$ (see, for example, \cite{Talagrand2003}, Proposition 1.1.3).
Hence we prove (\ref{eq: K-distance}).
We first note that we may assume that $0 < \Delta < 1$ since otherwise the proof is trivial (take $C \geq 2$ in (\ref{eq: K-distance})).
In what follows, let  $C>0$ be a generic constant that depends only on $\min_{1 \leq j \leq p} \sigma_{jj}^{Y}$ and $\max_{1 \leq j \leq p} \sigma_{jj}^{Y}$, and its value may change from place to place.
For $\beta > 0$, define $e_{p,\beta} := \beta^{-1} \log p$.
Consider and fix a $C^{2}$-function $g_0:\RR \to [0,1]$ such that $g_0(t)=1$ for $t \leq 0$ and $g_0(t)=0$ for $t \geq 1$. For example, we may take
\begin{equation*}
g_{0}(t) =
\begin{cases}
0, & t \geq 1, \\
30\int_{t}^{1} s^{2}(1-s)^{2} ds, & 0 < t < 1, \\
1, & t \leq 0.
\end{cases}
\end{equation*}
For given $x \in \RR, \beta > 0$ and $\delta > 0$,  define $g_{x,\beta,\delta} (t)=g_0(\delta^{-1}(t-x-e_{p,\beta}))$.  For this function $g_{x,\beta,\delta}$, $\| g_{x,\beta,\delta}' \|_{\infty} = \delta^{-1} \| g_{0}' \|_{\infty}$ and $\| g_{x,\beta,\delta}'' \|_{\infty} = \delta^{-2} \| g_{0}'' \|_{\infty}$. Moreover,
\begin{equation}
1(t \leq x+e_{p,\beta}) \leq g_{x,\beta,\delta}(t) \leq 1(t \leq x + e_{p,\beta} + \delta), \ \forall t \in \RR. \label{eq: property of g}
\end{equation}

For arbitrary $x \in \RR, \beta > 0$ and $\delta > 0$, observe that
\begin{align}
\Pr ( \max_{1 \leq j \leq p} X_{j}   \leq x )
&\leq \Pr ( F_{\beta} (X) \leq x + e_{p,\beta}) \leq \Ep[g_{x,\beta,\delta}(F_\beta(X))] \notag \\
&\leq \Ep[g_{x,\beta,\delta}(F_\beta(Y))] + C(\delta^{-2}+\beta \delta^{-1})\Delta \notag  \\
&\leq  \Pr ( F_\beta (Y) \leq x+e_{p,\beta}+\delta ) + C(\delta^{-2}+\beta \delta^{-1})\Delta \notag \\
&\leq \Pr (\max_{1 \leq j \leq p} Y_{j} \leq x+e_{p,\beta}+\delta ) + C(\delta^{-2}+\beta \delta^{-1})\Delta, \label{eq: Kdistance-intermediate}
\end{align}
where the first inequality follows from the inequality (\ref{eq: smooth max property}), the second from the inequality (\ref{eq: property of g}),  the third from Theorem \ref{theorem:comparison}, the fourth from the inequality (\ref{eq: property of g}), and the last from the inequality (\ref{eq: smooth max property}).
We wish to compare $\Pr (\max_{1 \leq j \leq p} Y_{j} \leq x+e_{p,\beta}+\delta) $ with $\Pr (\max_{1 \leq j \leq p} Y_{j} \leq x ) $, and this is where the anti-concentration inequality plays its role.
By Theorem \ref{thm: anticoncentration}, we have
\begin{align}
&\Pr (\max_{1 \leq j \leq p} Y_{j} \leq x+e_{p,\beta}+\delta ) -  \Pr (\max_{1 \leq j \leq p} Y_{j} \leq x ) \notag \\
&= \Pr (x < \max_{1 \leq j \leq p} Y_{j} \leq x+e_{p,\beta}+\delta ) \leq \LL ( \max_{1 \leq j \leq p} Y_{j}, e_{p,\beta}+\delta) \label{eq: Kdistance-intermediate2} \\
&\leq  C (e_{p,\beta} + \delta) \sqrt{1 \vee a_{p}^{2} \vee \log \{ 1 / (e_{p,\beta} + \delta) \}} \leq  C (e_{p,\beta} + \delta) \sqrt{1 \vee a_{p}^{2} \vee  \log (1 / \delta)}. \notag
\end{align}
Therefore,
\begin{multline}
\Pr ( \max_{1\leq j\leq p}X_{j}\leq x ) -  \Pr (\max_{1\leq j\leq p}Y_{j}\leq x)  \\
\leq C  \left \{ (\delta^{-2}+\beta \delta^{-1})\Delta +  (e_{p,\beta}+\delta)\sqrt{1 \vee a_{p}^{2} \vee \log(1/\delta)}  \right \}. \label{normalcomp}
\end{multline}
Take $a = a_{p} \vee \log^{1/2}(1/\Delta)$, and choose  $\beta$ and $\delta$ in such a way that
\begin{equation*}
\beta = \delta^{-1}\log p \ \text{and} \ \delta = \Delta^{1/3} (1 \vee a)^{-1/3} (2\log p)^{1/3}.
\end{equation*}
Recall that $p \geq 2$ and $0 < \Delta < 1$. Observe that $(\delta^{-2}+\beta \delta^{-1})\Delta \leq C \Delta^{1/3} (1 \vee a)^{2/3} \log^{1/3} p, (e_{p,\beta}+\delta) (1 \vee a_{p}) \leq C \Delta^{1/3} (1 \vee a)^{2/3} \log^{1/3} p$, and since $\delta \geq \Delta^{1/3} (1 \vee a)^{-1/3}$, we have  $\log (1/\delta) \leq (1/3) \log ((1 \vee a)/\Delta)$. Hence the right side on (\ref{normalcomp}) is bounded by $C \Delta^{1/3} \{ (1 \vee a)^{2/3} \log^{1/3} p + (1 \vee a)^{-1/3} ( \log^{1/3} p ) \log^{1/2}((1 \vee a)/\Delta) \}$. In addition, $(1 \vee a)^{-1} \log^{1/2} (1 \vee a)$ is bounded by a universal constant, so that
\begin{multline*}
\text{right side of (\ref{normalcomp})} \\
\leq C \Delta^{1/3} \{ (1 \vee a)^{2/3} \log^{1/3} p + (1 \vee a)^{-1/3} ( \log^{1/3} p ) \log^{1/2}(1/\Delta) \}.
\end{multline*}
The second term inside the bracket is bounded by $( \log^{1/3} p ) \log^{1/3}(1/\Delta)$ as $(1 \vee a)^{-1/3} \leq (\log (1/\Delta))^{-1/6}$,
and first term is bound by $(1 \vee a_{p})^{2/3} \log^{1/3} p + ( \log^{1/3} p ) \log^{1/3}(1/\Delta)$. Adjusting the constant $C$, the right side on the above displayed equation is bounded by $C \Delta^{1/3} \{ 1 \vee a_{p}^{2} \vee \log(1/\Delta) \}^{1/3} \log^{1/3}p$.

For the opposite direction, observe that
\begin{align*}
\Pr ( \max_{1 \leq j \leq p} X_{j}   \leq x )
&\geq \Pr ( F_{\beta} (X) \leq x) \geq \Ep[g_{x-e_{p,\beta}-\delta,\beta,\delta}(F_\beta(X))] \\
&\geq \Ep[g_{x-e_{p,\beta}-\delta,\beta,\delta}(F_\beta(Y))] - C(\delta^{-2}+\beta \delta^{-1})\Delta \\
&\geq \Pr (F_{\beta}(Y) \leq x - \delta ) - C(\delta^{-2}+\beta \delta^{-1})\Delta \\
&\geq \Pr (\max_{1 \leq j \leq p} Y_{j} \leq x - e_{p,\beta} -  \delta ) - C(\delta^{-2}+\beta \delta^{-1})\Delta.
\end{align*}
The rest of the proof is similar and hence omitted.
\qed

\section{Proof of Theorem \ref{thm: anticoncentration}}
\label{sec: proof for section 3}

The proof of Theorem  \ref{thm: anticoncentration} uses some properties of Gaussian measures.
We begin with preparing technical tools. The following two facts were essentially noted in \cite{Y65,Y68} (note: \cite{Y65} and \cite{Y68} did not contain a proof of Lemma \ref{lemma: Y65-1}, which we find non-trivial). For the sake of completeness, we give their proofs after the proof of Theorem \ref{thm: anticoncentration}.
\begin{lemma}
\label{lemma: Y65-1}
Let $(W_{1},\dots,W_{p})^{T}$ be a (not necessarily centered) Gaussian random vector in $\RR^{p}$ with $\Var(W_{j}) = 1$ for all $1 \leq j \leq p$.  Suppose that $\Corr(W_{j},W_{k})<1$ whenever $j \neq k$. Then the distribution of $\max_{1 \leq j \leq p} W_{j}$ is absolutely continuous with respect to the Lebesgue measure and a version of the density is given by
\begin{equation}
f(x) = \phi(x) \sum_{j=1}^{p} e^{ \Ep[W_{j}] x-(\Ep[W_{j}])^2/2 } \cdot \Pr\left (W_{k} \leq x, \forall k \neq j \mid W_{j} = x \right). \label{density}
\end{equation}
\end{lemma}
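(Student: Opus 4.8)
\emph{Proposal.} First I would rewrite the claimed density in a more transparent form. Since $W_{j}$ has mean $\mu_{j} := \Ep[W_{j}]$ and unit variance, its density at $x$ equals $\phi(x)e^{\mu_{j}x-\mu_{j}^{2}/2}$; writing $\rho_{jk} := \Corr(W_{j},W_{k})$ (which coincides with $\Ep[(W_{j}-\mu_{j})(W_{k}-\mu_{k})]$ because the variances are $1$), the asserted density is
\[
f(x) = \sum_{j=1}^{p} \phi(x)e^{\mu_{j}x-\mu_{j}^{2}/2}\, q_{j}(x), \qquad q_{j}(x) := \Pr(W_{k} \le x \ \forall k \ne j \mid W_{j} = x).
\]
The plan is to show that for all real $a \le b$,
\[
\Pr(\max_{1 \le j \le p} W_{j} \le b) - \Pr(\max_{1 \le j \le p} W_{j} \le a) = \int_{a}^{b} f(x)\, dx .
\]
Since $f \ge 0$ and, letting $b \to \infty$ and $a \to -\infty$, the left side tends to $1$ (so $\int_{\RR} f = 1$), this identity forces the law of $\max_{j} W_{j}$ to equal $f(x)\,dx$, which gives simultaneously the absolute continuity and the stated formula for the density.

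\emph{Step 1 (decomposition by the maximizing index).} Because $\rho_{jk} < 1$ and both variances are $1$, $\Var(W_{j}-W_{k}) = 2(1-\rho_{jk}) > 0$, hence $W_{j}-W_{k}$ is a nondegenerate Gaussian and $\Pr(W_{j}=W_{k}) = 0$ for $j \ne k$; so the maximum is a.s.\ attained at a single index. Setting $A_{j} := \{a < W_{j} \le b\} \cap \{W_{k} \le W_{j} \ \forall k \ne j\}$, one checks directly that $\bigcup_{j=1}^{p} A_{j} = \{a < \max_{j} W_{j} \le b\}$ and that $A_{j} \cap A_{j'} \subseteq \{W_{j}=W_{j'}\}$ is null for $j \ne j'$. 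Therefore
\[
\Pr(a < \max_{1 \le j \le p} W_{j} \le b) = \sum_{j=1}^{p} \Pr\big(a < W_{j} \le b, \ W_{k} \le W_{j} \ \forall k \ne j\big).
\]

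\emph{Step 2 (disintegration by Gaussian conditioning).} Fix $j$ and, for each $k \ne j$, put $U_{k} := W_{k} - \mu_{k} - \rho_{jk}(W_{j}-\mu_{j})$. Then $\Ep[U_{k}(W_{j}-\mu_{j})] = \rho_{jk} - \rho_{jk}\Var(W_{j}) = 0$, and since $((U_{k})_{k \ne j}, W_{j})$ is jointly Gaussian, $(U_{k})_{k \ne j}$ is \emph{independent} of $W_{j}$. Substituting $W_{k} = \mu_{k} + \rho_{jk}(W_{j}-\mu_{j}) + U_{k}$ and integrating out $W_{j}$ (Fubini) gives
\[
\Pr\big(a < W_{j} \le b, \ W_{k} \le W_{j} \ \forall k \ne j\big) = \int_{a}^{b} \Pr\big(U_{k} \le x - \mu_{k} - \rho_{jk}(x-\mu_{j}) \ \forall k \ne j\big)\, \phi(x)e^{\mu_{j}x-\mu_{j}^{2}/2}\, dx .
\]
The inner probability is precisely $q_{j}(x)$: it is the orthant probability of the Gaussian conditional law of $(W_{k})_{k \ne j}$ given $W_{j}=x$, whose mean is $(\mu_{k}+\rho_{jk}(x-\mu_{j}))_{k \ne j}$ and whose covariance does not depend on $x$; in particular $q_{j}$, hence $f$, is continuous, so all integrals above are legitimate. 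Summing over $j$ and using Step 1 produces exactly the identity in the Proposal, which finishes the proof.

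\emph{Expected main obstacle.} The one genuinely delicate point --- likely the reason the early references omitted a proof --- is that the joint distribution of $(W_{1},\dots,W_{p})$ need \emph{not} possess a density on $\RR^{p}$: all pairwise correlations may be strictly below $1$ while the covariance matrix is singular (e.g.\ one coordinate is a linear combination of the others). One therefore cannot simply differentiate an explicit joint density; the disintegration in Step 2 must instead be routed through conditioning on the \emph{single} nondegenerate variable $W_{j}$, together with the fact that uncorrelated jointly Gaussian vectors are independent. Handling this correctly, along with the almost-sure uniqueness of the maximizing index in Step 1 (which again uses $\rho_{jk} < 1$), is where all the care is needed; the rest is bookkeeping.
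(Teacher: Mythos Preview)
Your argument is correct. The only small inaccuracy is the claim that $q_{j}$ is continuous: when $\rho_{jk}=-1$ for some $k\neq j$, the residual $U_{k}$ degenerates to $0$, so $q_{j}$ has a jump at $x=(\mu_{j}+\mu_{k})/2$. This is harmless, since all you actually need is that $q_{j}$ is measurable and bounded by $1$, which suffices for Fubini and for the final integral identity. With that remark adjusted, the proof goes through as written.

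Your route is genuinely different from the paper's. The paper first establishes absolute continuity by the trivial bound $\Pr(M\in A)\le\sum_{j}\Pr(W_{j}\in A)$ and then identifies the density pointwise by computing $\lim_{\epsilon\downarrow 0}\epsilon^{-1}\Pr(x<M\le x+\epsilon)$. To do this it partitions $\{x<M\le x+\epsilon\}$ according to \emph{how many} coordinates fall in $(x,x+\epsilon]$, shows that the ``exactly one'' piece contributes the claimed density, and proves that the ``two or more'' pieces are $o(\epsilon)$; along the way it must verify right-continuity of $u\mapsto\Pr(W_{k}\le x,\ k\neq j\mid W_{j}=u)$ at $u=x$, which is the delicate step (and is where the exceptional points $(\mu_{j}+\mu_{k})/2$ enter). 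You instead partition $\{a<M\le b\}$ by the \emph{identity of the maximizing index}, which is a.s.\ unique because $\rho_{jk}<1$, and then compute each piece exactly as an integral via the same Gaussian regression decomposition. This yields $\Pr(a<M\le b)=\int_{a}^{b}f(x)\,dx$ directly, so absolute continuity and the density formula come out simultaneously, with no limits, no $o(\epsilon)$ analysis, and no need for the right-continuity argument. Both proofs rely on exactly the same technical device---projecting out $W_{j}$ so that conditioning on a single nondegenerate coordinate replaces any appeal to a (possibly nonexistent) joint density---so your ``expected main obstacle'' is on target and is handled the same way in both.
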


\begin{lemma}
\label{lemma: Y65-2}
Let $(W_{0},W_{1},\dots,W_{p})^{T}$ be  a (not necessarily centered) Gaussian random vector with $\Var(W_{j}) =1 $ for all $0 \leq j \leq p$. Suppose that $\Ep[W_{0} ]\geq 0$. Then the map
\begin{equation}\label{eq: prob}
x \mapsto  e^{ \Ep[W_{0} ] x-(\Ep[W_{0}])^2/2 } \cdot\Pr( W_{j} \leq x, 1 \leq \forall j \leq p  \mid W_{0} = x)
\end{equation}
is non-decreasing on $\RR$.
\end{lemma}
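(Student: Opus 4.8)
The plan is to condition on $W_0$ in the standard Gaussian fashion and thereby exhibit the map \eqref{eq: prob} as a positive constant times a product of two non-negative, non-decreasing functions of the shifted variable $t := x - \Ep[W_0]$. First I would set $\mu_j := \Ep[W_j]$ for $0 \le j \le p$ and $\rho_j := \Corr(W_j,W_0)$ for $1 \le j \le p$; since all variances equal $1$, $\rho_j = \Ep[W_j W_0] - \mu_j\mu_0 \in [-1,1]$, so $1-\rho_j \ge 0$ — this is the one genuinely substantive sign fact in the proof, together with the hypothesis $\mu_0 \ge 0$. Using joint Gaussianity of $(W_0,\dots,W_p)$, write $W_0 = \mu_0 + \xi$ with $\xi \sim N(0,1)$ and, for $1 \le j \le p$, $W_j = \mu_j + \rho_j\xi + V_j$ (equivalently $V_j := W_j - \mu_j - \rho_j(W_0-\mu_0)$), where $V=(V_1,\dots,V_p)$ is a centered Gaussian vector; since $\Ep[V_j\xi]=0$, $V$ is independent of $\xi$.

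With this decomposition, $\{W_0 = x\}$ corresponds to $\{\xi = x-\mu_0\}$, and on that event $W_j = \mu_j + \rho_j(x-\mu_0) + V_j$. Hence a version of the conditional probability in \eqref{eq: prob} — indeed the one implicitly used in Lemma \ref{lemma: Y65-1} — is
\begin{equation*}
\Pr(W_j \le x, \ 1 \le \forall j \le p \mid W_0 = x) = \Pr\bigl(V_j \le (1-\rho_j)(x-\mu_0) + (\mu_0 - \mu_j), \ 1 \le \forall j \le p\bigr).
\end{equation*}
Writing $t = x - \mu_0$ and using $\mu_0 x - \mu_0^2/2 = \mu_0 t + \mu_0^2/2$, the map \eqref{eq: prob} equals
\begin{equation*}
e^{\mu_0^2/2}\, e^{\mu_0 t}\, \Pr\bigl(V_j \le (1-\rho_j)t + (\mu_0-\mu_j), \ 1 \le \forall j \le p\bigr).
\end{equation*}

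To finish I would argue monotonicity factor by factor: $e^{\mu_0^2/2}$ is a positive constant; $t\mapsto e^{\mu_0 t}$ is non-negative and non-decreasing because $\mu_0 \ge 0$; and $t\mapsto \Pr(V_j \le (1-\rho_j)t + (\mu_0-\mu_j), \ 1 \le \forall j \le p)$ is non-negative and non-decreasing because each threshold $(1-\rho_j)t + (\mu_0-\mu_j)$ is non-decreasing in $t$ (here $1-\rho_j\ge 0$ enters) and the orthant $\{v : v_j \le b_j\ \forall j\}$ only grows as the $b_j$ increase. A product of two non-negative non-decreasing functions is non-decreasing, and $x\mapsto x-\mu_0$ is increasing, so the claim follows. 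I expect the only point requiring care — the ``main obstacle'', such as it is — to be the degenerate case $\rho_j = \pm 1$ (where $V_j\equiv 0$) and, relatedly, the fact that conditional probabilities are defined only up to null sets: one should check that the explicit version produced by the decomposition above is the one entering Lemma \ref{lemma: Y65-1}, and that in the degenerate case the corresponding constraint $0 \le (1-\rho_j)t + (\mu_0-\mu_j)$ is still non-decreasing in $t$. Neither is serious; the bulk of the argument is bookkeeping.
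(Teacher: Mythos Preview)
Your argument is correct and is essentially the paper's own proof: the paper likewise centers to $X_j = W_j - \mu_j$, projects onto $X_0$ to obtain $V_j = X_j - \Ep[X_jX_0]X_0$ independent of $X_0$, and concludes from $\Ep[X_jX_0]\le 1$ (your $1-\rho_j\ge 0$) that the thresholds $x - \mu_j - \Ep[X_jX_0](x-\mu_0)$ are non-decreasing in $x$. The only cosmetic differences are your change of variable $t=x-\mu_0$ and your explicit mention of the degenerate cases $\rho_j=\pm 1$, which the paper handles implicitly since the monotonicity argument goes through unchanged there.
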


Let us also recall (a version of) the Gaussian concentration (more precisely, deviation) inequality. See,  for example,  \cite{L01}, Theorem 7.1, for its proof.
\begin{lemma}\label{thm3}
Let $(X_{1},\dots,X_{p})^{T}$ be a centered Gaussian random vector in $\RR^{p}$ with $\max_{1 \leq j \leq p} \Ep [X_{j}^{2}] \leq  \sigma^{2}$ for some $\sigma^{2}> 0$. Then for every $r > 0$,
\begin{equation*}
\Pr  ( \max_{1 \leq j \leq p} X_{j} \geq \Ep [ \max_{ 1 \leq j \leq p} X_{j}  ] + r ) \leq e^{-r^{2}/(2\sigma^{2})}.
\end{equation*}
\end{lemma}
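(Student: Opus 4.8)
The plan is to reduce the statement to the concentration inequality for Lipschitz functions of a standard Gaussian vector, after representing the (possibly correlated) vector $X = (X_{1},\dots,X_{p})^{T}$ as a linear image of an i.i.d.\ Gaussian vector. Let $\Sigma = (\Ep[X_{j}X_{k}])_{1 \leq j,k \leq p}$ be the covariance matrix of $X$. Since $\Sigma$ is positive semidefinite, it factors as $\Sigma = AA^{T}$ for some $p \times p$ matrix $A$ with rows $a_{1}^{T},\dots,a_{p}^{T}$ (e.g.\ a symmetric square root or a Cholesky factor), so that $X \stackrel{d}{=} AG$ with $G \sim N(0,I_{p})$ and $\| a_{j} \|_{2}^{2} = \Sigma_{jj} = \Var(X_{j}) \leq \sigma^{2}$ for every $j$. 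Hence $\max_{1 \leq j \leq p} X_{j} \stackrel{d}{=} f(G)$, where $f(z) := \max_{1 \leq j \leq p} a_{j}^{T}z$.

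Next I would check that $f$ is $\sigma$-Lipschitz for the Euclidean norm: for $z,z' \in \RR^{p}$, if $j^{*}$ attains the maximum defining $f(z)$ then $f(z) - f(z') \leq a_{j^{*}}^{T}(z - z') \leq \| a_{j^{*}} \|_{2}\, \| z - z' \|_{2} \leq \sigma \| z - z' \|_{2}$, and symmetrically, so $| f(z) - f(z') | \leq \sigma \| z - z' \|_{2}$. (If a smooth representative is preferred for the argument that follows, one may instead work with $z \mapsto F_{\beta}(Az)$, using that it is $C^{\infty}$, that $\| \nabla ( F_{\beta} \circ A )(z) \|_{2}^{2} = \Var\big( \sum_{j=1}^{p} \pi_{j}(Az) X_{j} \big) \leq \sigma^{2}$ by the triangle inequality in $L^{2}$ together with $\pi_{j} \geq 0$, $\sum_{j}\pi_{j}=1$, and that $0 \leq F_{\beta}(Az) - f(z) \leq \beta^{-1}\log p$ by \eqref{eq: smooth max property}; one then lets $\beta \to \infty$ at the very end.)

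The last ingredient is the Gaussian concentration inequality for $\sigma$-Lipschitz functions, which asserts $\Pr ( f(G) \geq \Ep[f(G)] + r ) \leq e^{-r^{2}/(2\sigma^{2})}$ for all $r > 0$; combined with the distributional identity $\max_{1 \leq j \leq p} X_{j} \stackrel{d}{=} f(G)$ this is precisely the claim. For completeness one may prove this concentration bound by the Herbst argument: applying the Gaussian logarithmic Sobolev inequality to $e^{\lambda f / 2}$ gives $\frac{d}{d\lambda}\big( \lambda^{-1}\log \Ep[e^{\lambda (f(G) - \Ep[f(G)])}] \big) \leq \sigma^{2}/2$, whence $\Ep[e^{\lambda(f(G) - \Ep[f(G)])}] \leq e^{\lambda^{2}\sigma^{2}/2}$ for $\lambda > 0$, and Markov's inequality with the optimal choice $\lambda = r/\sigma^{2}$ concludes. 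The one genuinely delicate point is the \emph{sharpness} of the constant $1/(2\sigma^{2})$ in the exponent, with no spurious multiplicative factor: an elementary Gaussian interpolation (``smart path'') argument only yields a worse constant, and it is exactly the isoperimetric / log-Sobolev input that delivers the stated form --- which is why the lemma is quoted from \cite{L01} (the result going back to \cite{B75} and \cite{ST78}).
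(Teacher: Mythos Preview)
Your argument is correct. Note, however, that the paper does not supply its own proof of this lemma: it simply cites \cite{L01}, Theorem 7.1 (tracing the result back to \cite{B75} and \cite{ST78}). What you have written is one of the standard derivations of that theorem --- represent the vector as a linear image of a standard Gaussian, verify the $\sigma$-Lipschitz bound for $z \mapsto \max_j a_j^T z$, and invoke Gaussian concentration for Lipschitz functionals with the sharp constant via the log-Sobolev/Herbst route --- so in effect you have unpacked the cited reference rather than diverged from it.
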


We are now in position to prove Theorem \ref{thm: anticoncentration}.

\begin{proof}[Proof of Theorem \ref{thm: anticoncentration}]
The proof consists of three steps.

{\bf Step 1}. This step reduces the analysis to the unit variance case. Pick any $x\geq0$. Let $W_{j} :=(X_{j}-x)/\sigma_{j}+x/\underline{\sigma}$.
Then $\Ep[W_{j}]\geq 0$ and $\Var (W_{j})=1$. Define $Z:= \max_{1 \leq j \leq p} W_{j}$.
Then we have
\begin{align*}
&\Pr (  |\max_{1 \leq j \leq p}X_{j}-x |\leq \epsilon ) \leq \Pr\left(\left|\max_{1 \leq j \leq p}\frac{X_{j}-x}{\sigma_j}\right|\leq \frac{\epsilon}{\underline{\sigma}}\right) \\
&\quad \leq \sup_{y\in\RR}\Pr\left(\left|\max_{1 \leq j \leq p}\frac{X_{j}-x}{\sigma_j}+\frac{x}{\underline{\sigma}}-y\right|\leq \frac{\epsilon}{\underline{\sigma}}\right) =\sup_{y\in\RR}\Pr\left(\left|Z-y\right|\leq \frac{\epsilon}{\underline{\sigma}}\right).
\end{align*}

{\bf Step 2}. This step bounds the density of $Z$.
Without loss of generality, we may assume  that $\Corr (W_{j},W_{k})<1$ whenever $j \neq k$. Since the marginal distribution of $W_{j}$ is $N(\mu_{j},1)$ where $\mu_{j}:=\Ep[W_{j}]=(x/\underline{\sigma}-x/\sigma_j)\geq 0$, by Lemma \ref{lemma: Y65-1}, $Z$ has density of the form
\begin{equation}\label{eq: density form}
f_{p}(z) = \phi(z) G_{p} (z),
\end{equation}
where the map $z \mapsto G_{p}(z)$ is non-decreasing by Lemma \ref{lemma: Y65-2}. Define $\bar{z}:= (1/\underline{\sigma}-1/\overline{\sigma})x$, so that $\mu_{j}\leq \bar{z}$ for every $1 \leq j \leq p$. Moreover, define $\bar{Z}:=\max_{1 \leq j \leq p}(W_{j}-\mu_{j})$.  Then
\begin{align*}
&\int_{z}^{\infty} \phi (u) du G_{p}(z) \leq \int_{z}^{\infty} \phi(u) G_{p}(u) du = \Pr( Z> z) \\
&\qquad \leq \Pr( \bar{Z} > z - \bar{z} ) \leq  \exp \left \{ -\frac{(z-\bar{z}-\Ep[ \bar{Z}  ])_{+}^{2}}{2} \right \},
\end{align*}
where the last inequality is due to the Gaussian concentration inequality (Lemma \ref{thm3}).
Note that $W_{j}-\mu_{j}=X_{j}/\sigma_{j}$, so that
\begin{equation*}
\Ep[\bar{Z}]=\Ep [ \max_{1 \leq j \leq p}(X_{j}/\sigma_{j}) ] =: a_p.
\end{equation*}
Therefore, for every $z \in \RR$,
\begin{equation}\label{eq: bound Gn}
G_{p} (z) \leq \frac{1}{1-\Phi(z)} \exp \left \{ -\frac{(z-\bar{z}-a_p)_{+}^{2}}{2} \right \}.
\end{equation}
Mill's inequality states that for $z>0$,
\begin{equation*}
z \leq \frac{\phi(z)}{1-\Phi (z)}\leq z \frac{1+z^2}{z^2},
\end{equation*}
and in particular $(1+z^2)/z^2 \leq 2$ when $z>1$. Moreover, $\phi(z)/\{ 1-\Phi(z) \}\leq 1.53 \leq 2 $ for  $z \in (-\infty, 1)$.  Therefore,
\begin{equation*}
\phi(z)/\{ 1-\Phi(z) \} \leq 2 (z \vee 1),  \ \forall z \in \RR.
\end{equation*}
Hence we conclude from this, (\ref{eq: bound Gn}), and (\ref{eq: density form}) that
\begin{equation*}
f_{p}(z) \leq 2(z \vee 1) \exp \left \{  -\frac{(z-\bar{z}-a_p)_{+}^{2}}{2} \right \}, \ \forall z \in \RR.
\end{equation*}

{\bf Step 3}. By Step 2, for every $y \in \RR$ and $t > 0$,  we have
\begin{align*}
\Pr\left( | Z - y | \leq t \right) &= \int_{y -t}^{y+t} f_{p}(z) dz
\leq 2 t \max_{z \in [y -t, y+t]}f_{p}(z)  \leq 4 t (\bar{z}+ a_p + 1),
\end{align*}
where the last inequality follows from the fact that the map $z \mapsto z e^{-(z-a)^{2}/2}$ (with $a > 0$) is non-increasing on $[a+1,\infty)$.
Combining this inequality with  Step 1, for every $x \geq 0$ and $\epsilon > 0$, we have
\begin{equation}
\Pr ( | \max_{1 \leq j \leq p}X_{j} - x | \leq \epsilon ) \leq 4 \epsilon \{  (1/\underline{\sigma}-1/\overline{\sigma}) |x|+a_p + 1 \} /\underline{\sigma}.  \label{eq: anticoncentration main}
\end{equation}
This inequality also holds for $x<0$ by the similar argument, and hence it holds
for every $x \in \RR$.

If  $\underline{\sigma}=\overline{\sigma} =\sigma$, then we have
\begin{equation*}
\Pr ( | \max_{1 \leq j \leq p}X_{j} - x | \leq \epsilon ) \leq 4 \epsilon (a_p + 1)/\sigma, \ \forall  x \in \RR, \ \forall \epsilon > 0,
\end{equation*}
which leads to the first assertion of the theorem.

On the other hand, consider the case where $\underline{\sigma} < \overline{\sigma}$. Suppose first that $0 < \epsilon \leq \underline{\sigma}$.
By the Gaussian concentration inequality (Lemma \ref{thm3}), for $|x|\geq \epsilon+\overline \sigma(a_p+\sqrt{2\log(\underline{\sigma}/\epsilon)})$, we have
\begin{align}
&\Pr ( | \max_{1 \leq j \leq p} X_{j}-x|\leq\epsilon ) \leq \Pr (\max_{1 \leq j \leq p}X_{j} \geq |x|-\epsilon ) \notag \\
&\quad \leq \Pr \left (\max_{1 \leq j \leq p}X_{j}\geq \Ep[\max_{1 \leq j \leq p}X_{j}] +\overline{\sigma}\sqrt{2\log(\underline{\sigma}/\epsilon)} \right ) \leq \epsilon/\underline{\sigma}. \label{eq: anticoncentration tail}
\end{align}
For  $|x| \leq \epsilon+\overline \sigma(a_p+\sqrt{2\log(\underline{\sigma}/\epsilon)})$, by  (\ref{eq: anticoncentration main}) and using $\epsilon \leq \underline{\sigma}$, we have
\begin{align}
&\Pr ( | \max_{1 \leq j \leq p}X_{j} - x | \leq \epsilon ) \notag \\
&\quad \leq 4 \epsilon \{ (\overline{\sigma}/\underline{\sigma}) a_{p} +  (\overline{\sigma}/\underline{\sigma}-1)\sqrt{2\log(\underline{\sigma}/\epsilon)}   + 2 - \underline{\sigma}/\overline{\sigma}\}/\underline{\sigma}.
 \label{eq: anticoncentration non-tail}
\end{align}
Combining (\ref{eq: anticoncentration tail}) and (\ref{eq: anticoncentration non-tail}), we obtain the inequality in (ii) for $0 < \epsilon \leq \underline{\sigma}$ (with a suitable choice of $C$). If $\epsilon> \underline{\sigma}$, the inequality in (ii) trivially follows by taking $C \geq 1/\underline{\sigma}$.  This completes the proof.
\end{proof}

\begin{proof}[Proof of Lemma \ref{lemma: Y65-1}]
Let $M := \max_{1 \leq j \leq p} W_{j}$.
The absolute continuity of the distribution of $M$ is deduced from the fact that $\Pr(M \in A) \leq \sum_{j=1}^{p} \Pr(W_{j} \in A)$ for every Borel measurable subset $A$ of $\RR$.
Hence, to show that a version of the density of $M$ is given by (\ref{density}), it is enough to show that $\lim_{\epsilon \downarrow 0} \epsilon^{-1} \Pr(x < M \leq x+\epsilon)$ equals the right side on (\ref{density}) for a.e. $x \in \RR$.

For every $x \in \RR$ and $\epsilon > 0$, observe that
\begin{align*}
& \{ x < M \leq x + \epsilon \} \\
& = \{ \exists i_{0}, W_{i_{0}} > x \ \text{and} \ \forall i,  W_{i} \leq x +\epsilon  \} \\
&= \{ \exists i_{1}, x < W_{i_{1}} \leq x + \epsilon \ \text{and} \ \forall i \neq i_{1}, W_{i} \leq x \} \\
&\quad \cup \{ \exists i_{1}, \exists i_{2}, x < W_{i_{1}} \leq x + \epsilon, x < W_{i_{2}} \leq x + \epsilon \ \text{and} \ \forall i \notin \{ i_{1},i_{2} \},   W_{i} \leq x \} \\
&\qquad \qquad \vdots \\
&\quad \cup \{ \forall i, x < W_{i} \leq x + \epsilon \} \\
&=: A^{x,\epsilon}_{1} \cup A^{x,\epsilon}_{2} \cup \cdots \cup A^{x,\epsilon}_{p}.
\end{align*}
Note that the events $A^{x,\epsilon}_{1},A^{x,\epsilon}_{2},\dots,A^{x,\epsilon}_{p}$ are disjoint. For $A^{x,\epsilon}_{1}$, since
\begin{equation*}
A^{x,\epsilon}_{1} = \cup_{i=1}^{p} \{ x < W_{i} \leq x+\epsilon \ \text{and} \ W_{j} \leq x, \forall j \neq i \},
\end{equation*}
where the events on the right side are disjoint, we have
\begin{align*}
\Pr(A^{x,\epsilon}_{1}) &= \sum_{i=1}^{p} \Pr(x < W_{i} \leq x+\epsilon \ \text{and} \ W_{j} \leq x, \forall j \neq i) \\
&= \sum_{i=1}^{p} \int_{x}^{x+\epsilon} \Pr(W_{j} \leq x, \forall j \neq i \mid W_{i} = u) \phi(u-\mu_{i}) du,
\end{align*}
where $\mu_{i} := \Ep [ W_{i} ]$.
We show that for every $1 \leq i \leq p$ and a.e. $x \in \RR$, the map $u \mapsto \Pr(W_{j} \leq x, \forall j \neq i \mid W_{i} = u)$ is right continuous at $x$.
Let $X_{j}=W_{j}-\mu_{j}$ so that $X_{j}$ are standard Gaussian random variables. Then
\begin{equation*}
\Pr(W_{j} \leq x, \forall j \neq i \mid W_{i} = u)=\Pr(X_{j} \leq x-\mu_{j}, \forall j \neq i \mid X_{i} = u-\mu_{i}).
\end{equation*}
Pick $i=1$. Let $V_{j} = X_{j} - \Ep[ X_{j} X_{1}] X_{1}$ be the residual from the orthogonal projection of $X_{j}$ on $X_{1}$.
Note that the vector $(V_{j})_{2 \leq j \leq p}$ and $X_{1}$ are jointly Gaussian and uncorrelated, and hence independent, by which we have
\begin{align*}
&\Pr(X_{j} \leq x-\mu_{j}, 2 \leq \forall j \leq p \mid X_{1} = u-\mu_{1}) \\
&= \Pr(V_{j} \leq x-\mu_{j}-\Ep[ X_{j} X_{1}] (u-\mu_{1}), 2 \leq \forall j \leq p \mid X_{1} = u-\mu_{1}) \\
&=  \Pr(V_{j} \leq x-\mu_{j}-\Ep[ X_{j} X_{1}] (u-\mu_{1}), 2 \leq \forall j \leq p).
\end{align*}
Define $J := \{ j \in \{ 2,\dots,p \} : \Ep[ X_{j} X_{1}] \leq 0 \}$ and $J^{c} := \{ 2,\dots, p\} \backslash J$. Then
\begin{align*}
&\Pr(V_{j} \leq x-\mu_{j}-\Ep[ X_{j} X_{1}] (u-\mu_{1}), 2 \leq \forall j \leq p) \\
&\to \Pr( V_{j} \leq x_j, \forall j \in J, V_{j'} < x_{j'}, \forall j' \in J^{c}), \ \text{as} \ u \downarrow x,
\end{align*}
where $x_j=x-\mu_{j}-\Ep[X_{j}X_1](x-\mu_{1})$.
Here each $V_{j}$ either degenerates to $0$ (which occurs only when $X_{j}$ and $X_{1}$ are perfectly negatively correlated, that is, $\Ep [ X_{j} X_{1} ] = -1$) or has a non-degenerate Gaussian distribution, and hence for every $x \in \RR$ expect for at most $(p-1)$ points $(\mu_{1}+\mu_{j})/2, 2 \leq j \leq p$,
\begin{align*}
\Pr( V_{j} \leq x_j, \forall j \in J, V_{j'} < x_{j'}, \forall j' \in J^{c}) &=  \Pr( V_{j} \leq x_{j}, 2 \leq \forall j \leq p ) \\
&= \Pr(W_{j} \leq x, 2 \leq \forall j \leq p \mid W_{1} = x).
\end{align*}
Hence for $i=1$ and a.e. $x \in \RR$, the map $u \mapsto \Pr(W_{i} \leq x, \forall j \neq i \mid W_{i} = u)$ is right continuous at $x$. The same conclusion clearly holds for $2 \leq i \leq p$.
Therefore, we conclude that, for a.e. $x \in \RR$, as $\epsilon \downarrow 0$,
\begin{eqnarray*}
\frac{1}{\epsilon} \Pr(A_{1}^{x,\epsilon}) &\to& \sum_{i=1}^{p} \Pr(W_{j} \leq x, \forall j \neq i \mid W_{i} = x)\phi(x-\mu_{i})\\
&=&\phi(x)\sum_{i=1}^{p} e^{\mu_{i}x-\mu_{i}^2/2} \Pr(W_{j} \leq x, \forall j \neq i \mid W_{i} = x).
\end{eqnarray*}

In the rest of the proof, we  show that, for every $2 \leq i \leq p$ and $x \in \RR$, $\Pr(A_{i}^{x,\epsilon}) = o(\epsilon)$ as $\epsilon \downarrow 0$, which leads to the desired conclusion. Fix any $2 \leq i \leq p$. The probability $\Pr(A_{i}^{x,\epsilon})$ is bounded by a sum of terms of the form  $\Pr(x < W_{j} \leq x+\epsilon, x < W_{k} \leq x+\epsilon)$ with $j \neq k$. Recall that $\Corr(W_{j},W_{k})<1$. Assume that $\Corr(W_{j},W_{k})=-1$. Then for  every $x \in \RR$, $\Pr(x < W_{j} \leq x+\epsilon, x < W_{k} \leq x+\epsilon)$ is zero for sufficiently small $\epsilon$. Otherwise, $(W_{j},W_{k})^{T}$ obeys a two-dimensional, non-degenerate Gaussian distribution and hence $\Pr(x < W_{j} \leq x+\epsilon, x < W_{k} \leq x+\epsilon) = O(\epsilon^{2}) = o(\epsilon)$ as $\epsilon \downarrow 0$ for every $x \in \RR$. This completes the proof.
\end{proof}

\begin{proof}[Proof of Lemma \ref{lemma: Y65-2}]
Since $\Ep [ W_{0} ] \geq 0$, the map $x \mapsto \exp( \Ep[ W_{0} ] x- (\Ep[ W_{0} ])^{2} )$ is non-decreasing. Thus it suffices to show that the map
\begin{equation}\label{eq: reduced probability}
x \mapsto \Pr ( W_{1} \leq x, \dots,  W_{p} \leq x \mid W_{0} =x)
\end{equation}
is non-decreasing.
As in the proof of Lemma \ref{lemma: Y65-1}, let $X_{j}=W_{j}-\Ep [ W_{j} ]$ and let $V_{j} = X_{j} - \Ep[ X_{j} X_{0}] X_{0}$ be the residual from the orthogonal projection of $X_{j}$ on $X_{0}$.  Note that the vector $(V_{j})_{1 \leq j \leq p}$ and $X_{0}$ are independent.
Hence the  probability in (\ref{eq: reduced probability}) equals
\begin{align*}
&\Pr(V_{j} \leq x-\mu_{j} - \Ep[X_{j}X_{0}](x-\Ep[ W_{0} ] ),  1  \leq \forall j \leq p \mid X_{0}=x-\Ep[ W_{0} ])\\
&=\Pr(V_{j} \leq x-\mu_{j} - \Ep[X_{j}X_{0}](x-\Ep[ W_{0} ]),  1  \leq \forall j \leq p),
\end{align*}
where the latter is non-decreasing in $x$ on $\RR$ since $\Ep[X_{j}X_{0}] \leq 1$.
\end{proof}

\appendix

\section{Proof of Lemma \ref{lem: Delta estimate}}

Lemma \ref{lem: Delta estimate} follows from the following maximal inequality and H\"{o}lder's inequality. Here we write $a \lesssim b$  if $a$ is smaller than or equal to $b$ up to a universal positive constant.

\begin{lemma}
\label{lem: maximal ineq}
Let $Z_{1},\dots,Z_{n}$ be independent random vectors in $\RR^{p}$ with $p \geq 2$. Define $M := \max_{1 \leq i \leq n} \max_{1 \leq j \leq p} | Z_{ij} |$ and $\sigma^{2} := \max_{1 \leq j \leq p} \sum_{i=1}^{n} \Ep [ Z_{ij}^{2} ]$.
Then
\begin{equation*}
\Ep [\max_{1 \leq j \leq p} | {\textstyle \sum}_{i=1}^{n} (Z_{ij} - \Ep [Z_{ij}]) | ] \lesssim (\sigma \sqrt{\log p} + \sqrt{\Ep [ M^{2} ]} \log p).
\end{equation*}
\end{lemma}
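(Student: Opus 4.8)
The plan is to combine a symmetrization step with a maximal inequality for Rademacher averages, which reduces the problem to bounding $\Ep[\max_{j}\sum_{i}Z_{ij}^{2}]$; that quantity is in turn controlled by a second symmetrization together with Talagrand's contraction principle, which produces a self-referential inequality that can be solved explicitly. Throughout, $\eps_{1},\dots,\eps_{n}$ denote independent Rademacher variables, independent of the $Z_{i}$.

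First I would symmetrize: the standard symmetrization inequality gives
\[
\Ep\Big[\max_{1\le j\le p}\Big|{\textstyle\sum}_{i=1}^{n}(Z_{ij}-\Ep[Z_{ij}])\Big|\Big]\le 2\,\Ep\Big[\max_{1\le j\le p}\Big|{\textstyle\sum}_{i=1}^{n}\eps_{i}Z_{ij}\Big|\Big].
\]
Conditionally on $(Z_{i})_{i}$, each sum $\sum_{i}\eps_{i}Z_{ij}$ is sub-Gaussian with variance proxy $\sum_{i}Z_{ij}^{2}$, so the elementary union-bound maximal inequality for sub-Gaussian variables yields $\Ep_{\eps}[\max_{j}|\sum_{i}\eps_{i}Z_{ij}|]\le\sqrt{2\log(2p)}\,(\max_{j}\sum_{i}Z_{ij}^{2})^{1/2}$. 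Taking expectation over $(Z_{i})$ and applying Jensen's inequality, the task is reduced to bounding $D:=\Ep[\max_{j}\sum_{i}Z_{ij}^{2}]$.

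For $D$ I would write $\max_{j}\sum_{i}Z_{ij}^{2}\le\sigma^{2}+\max_{j}|\sum_{i}(Z_{ij}^{2}-\Ep[Z_{ij}^{2}])|$, symmetrize the last term, and apply Talagrand's contraction principle conditionally on $(Z_{i})$: since $|Z_{ij}|\le M$ and $t\mapsto t^{2}/(2M)$ is $1$-Lipschitz on $[-M,M]$ and vanishes at $0$, one obtains $\Ep_{\eps}[\max_{j}|\sum_{i}\eps_{i}Z_{ij}^{2}|]\le CM\,\Ep_{\eps}[\max_{j}|\sum_{i}\eps_{i}Z_{ij}|]\le CM\sqrt{\log p}\,(\max_{j}\sum_{i}Z_{ij}^{2})^{1/2}$, where the last step reuses the conditional sub-Gaussian bound above. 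Taking expectations and applying the Cauchy--Schwarz inequality (to handle the random $M$) gives
\[
D\le\sigma^{2}+C\sqrt{\log p}\,\sqrt{\Ep[M^{2}]}\,\sqrt{D},
\]
which is a quadratic inequality in $\sqrt{D}$; solving it yields $\sqrt{D}\le C'\big(\sigma+\sqrt{\log p}\,\sqrt{\Ep[M^{2}]}\big)$. Substituting back into the bound from the previous paragraph gives $\Ep[\max_{j}|\sum_{i}(Z_{ij}-\Ep[Z_{ij}])|]\lesssim\sigma\sqrt{\log p}+\sqrt{\Ep[M^{2}]}\log p$, as claimed.

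The main obstacle is the treatment of $D=\Ep[\max_{j}\sum_{i}Z_{ij}^{2}]$. A naive recursion --- applying the lemma itself to the variables $Z_{ij}^{2}$ --- would bring in $\max_{j}\sum_{i}\Ep[Z_{ij}^{4}]$ and $\Ep[M^{4}]$, higher moments that do not appear in the statement. The contraction principle is exactly what avoids this: it trades the maximum of sums of squares back for a Rademacher maximum of the \emph{original} variables, at the cost of a multiplicative factor $M$, which is then absorbed by Cauchy--Schwarz and by the self-improving (quadratic) inequality for $D$. One must also be careful that $M$ is random and therefore cannot be extracted from the conditional expectations directly; the Cauchy--Schwarz split is what makes this legitimate.
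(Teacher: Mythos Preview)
Your argument is correct and matches the paper's proof in structure: symmetrize, apply the conditional sub-Gaussian maximal inequality, reduce to $D=\Ep[\max_{j}\sum_{i}Z_{ij}^{2}]$, and bound $D$ via a second symmetrization that leads to the self-referential inequality $D\lesssim\sigma^{2}+\sqrt{\Ep[M^{2}]\log p}\,\sqrt{D}$, which is then solved. The one cosmetic difference is that where you invoke Talagrand's contraction principle to pass from $\sum_{i}\eps_{i}Z_{ij}^{2}$ to $M\cdot\Ep_{\eps}[\max_{j}|\sum_{i}\eps_{i}Z_{ij}|]$, the paper instead applies the sub-Gaussian bound directly to the Rademacher sum of $Z_{ij}^{2}$ and then uses the elementary inequality $Z_{ij}^{4}\le M^{2}Z_{ij}^{2}$; both routes yield the same intermediate term $M\sqrt{\log p}\,(\max_{j}\sum_{i}Z_{ij}^{2})^{1/2}$ and hence the same quadratic inequality for $D$.
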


We shall use the following lemma.

\begin{lemma}
\label{lem: maximal ineq nonnegative}
Let $V_{1},\dots,V_{n}$ be independent random vectors in $\RR^{p}$ with $p \geq 2$ such that $V_{ij} \geq 0$ for all $1 \leq i \leq n$ and $1 \leq j \leq p$. Then
\begin{equation*}
\Ep [ \max_{1 \leq j \leq p} {\textstyle \sum}_{i=1}^{n} V_{ij} ] \lesssim \max_{1 \leq j \leq p} \Ep [ {\textstyle \sum}_{i=1}^{n} V_{ij} ] + \Ep [ \max_{1 \leq i \leq n} \max_{1 \leq j \leq p} V_{ij} ] \log p.
\end{equation*}
\end{lemma}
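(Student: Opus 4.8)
The plan is to use a symmetrization argument followed by the contraction principle, reducing the bound on the maximum of nonnegative sums to a control of the Rademacher process, and then to separate the ``bulk'' contribution (governed by the second moments, i.e.\ the $\max_j \Ep[\sum_i V_{ij}]$ term) from the ``envelope'' contribution (governed by $\Ep[\max_{i,j} V_{ij}]$). Concretely, write $\mu := \max_{1 \leq j \leq p} \Ep[\sum_{i=1}^n V_{ij}]$ and let $\bar V_{ij} := V_{ij} - \Ep[V_{ij}]$. Then
\begin{equation*}
\Ep\Big[\max_{1 \leq j \leq p} {\textstyle \sum}_{i=1}^n V_{ij}\Big] \leq \mu + \Ep\Big[\max_{1 \leq j \leq p}\Big|{\textstyle \sum}_{i=1}^n \bar V_{ij}\Big|\Big],
\end{equation*}
so it suffices to bound the centered maximal term. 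By the standard symmetrization inequality (see \cite{VW96}, Lemma 2.3.1), this is at most $2\Ep[\max_{1 \leq j \leq p}|\sum_{i=1}^n \eps_i V_{ij}|]$ where $\eps_1,\dots,\eps_n$ are independent Rademacher signs independent of the $V_i$.

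The next step is to handle the Rademacher average conditionally on $(V_i)$. The key tool is a Bernstein/maximal inequality for Rademacher sums: conditionally on $(V_i)$, each $\sum_{i=1}^n \eps_i V_{ij}$ is a sub-Gaussian random variable with variance proxy $\sum_{i=1}^n V_{ij}^2 \leq M \sum_{i=1}^n V_{ij}$ where $M := \max_{i,j} V_{ij}$, so a maximal inequality over $j = 1,\dots,p$ gives
\begin{equation*}
\Ep\Big[\max_{1 \leq j \leq p}\Big|{\textstyle \sum}_{i=1}^n \eps_i V_{ij}\Big| \,\Big|\, (V_i)\Big] \lesssim \sqrt{\log p}\,\max_{1 \leq j \leq p}\Big({\textstyle \sum}_{i=1}^n V_{ij}^2\Big)^{1/2} \lesssim \sqrt{\log p}\,\Big(M \max_{1 \leq j \leq p}{\textstyle \sum}_{i=1}^n V_{ij}\Big)^{1/2}.
\end{equation*}
Taking expectations over $(V_i)$ and applying Cauchy--Schwarz yields a bound of order $\sqrt{\log p}\,\sqrt{\Ep[M]}\,\sqrt{\Ep[\max_j \sum_i V_{ij}]}$. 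Writing $A := \Ep[\max_j \sum_i V_{ij}]$, we have shown $A \lesssim \mu + \sqrt{\log p}\,\sqrt{\Ep[M]}\,\sqrt{A}$, and I note that $\Ep[\max_j \sum_i V_{ij}] \geq \max_j \Ep[\sum_i V_{ij}] = \mu$ automatically gives one side, but more usefully, the inequality $A \lesssim \mu + \sqrt{\log p}\sqrt{\Ep[M]}\sqrt{A}$ is a quadratic inequality in $\sqrt{A}$ whose solution gives $A \lesssim \mu + \Ep[M](\log p)$.

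The main obstacle I anticipate is the last algebraic step and making the conditional maximal inequality fully rigorous: one must be careful that the ``variance proxy'' bound $\sum_i V_{ij}^2 \leq M\sum_i V_{ij}$ is used inside the conditional expectation before Cauchy--Schwarz, and that solving $A \lesssim \mu + c\sqrt{A}$ (with $c \asymp \sqrt{\log p}\,\sqrt{\Ep[M]}$) correctly produces $A \lesssim \mu + c^2 \asymp \mu + \Ep[M]\log p$ via the elementary fact that $A \leq \alpha + \beta\sqrt{A}$ implies $A \leq 2\alpha + \beta^2$. A secondary point is that the sub-Gaussian maximal inequality over $p$ variables contributes the $\sqrt{\log p}$ factor cleanly only because $p \geq 2$; otherwise the bound would need an additive adjustment. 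Everything else — symmetrization, contraction, Cauchy--Schwarz — is routine.
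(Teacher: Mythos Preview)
Your proposal is correct and follows essentially the same route as the paper's proof: center, symmetrize via Lemma 2.3.1 of \cite{VW96}, apply the sub-Gaussian maximal inequality conditionally to get the $\sqrt{\log p}\,(\sum_i V_{ij}^2)^{1/2}$ bound, use $V_{ij}^2 \leq M V_{ij}$ (which relies on $V_{ij}\geq 0$), apply Cauchy--Schwarz, and solve the resulting inequality $I \lesssim \mu + \sqrt{\Ep[M]\log p}\,\sqrt{I}$ to obtain $I \lesssim \mu + \Ep[M]\log p$. The paper's argument is identical in structure and in the tools invoked.
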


\begin{proof}[Proof of Lemma \ref{lem: maximal ineq nonnegative}]
We make use of the symmetrization technique. Let $\eps_{1},\dots,\eps_{n}$ be independent Rademacher random variables (that is, $\Pr (\eps_{i} = 1) = \Pr ( \eps_{i} = -1) = 1/2$) independent of $V_{1}^{n} := \{ V_{1},\dots,V_{n} \}$.
Then by the triangle inequality and  Lemma 2.3.1 in \cite{VW96},
\begin{align*}
I := \Ep [ \max_{1 \leq j \leq p} {\textstyle \sum}_{i=1}^{n} V_{ij} ] &\leq \max_{1 \leq j \leq p} \Ep [ {\textstyle \sum}_{i=1}^{n} V_{ij} ] + \Ep [ \max_{1 \leq j \leq p} | {\textstyle \sum}_{i=1}^{n} (V_{ij} - \Ep [ V_{ij} ]) |] \\
&\leq \max_{1 \leq j \leq p} \Ep [ {\textstyle \sum}_{i=1}^{n} V_{ij} ] + 2\Ep [ \max_{1 \leq j \leq p} | {\textstyle \sum}_{i=1}^{n} \eps_{i} V_{ij}  |].
\end{align*}
By Lemmas 2.2.2 and 2.2.7 in \cite{VW96}, we have
\begin{align*}
\Ep [ \max_{1 \leq j \leq p} | {\textstyle \sum}_{i=1}^{n} \eps_{i} V_{ij}  | \mid V_{1}^{n} ] &\lesssim \max_{1 \leq j \leq p} ({\textstyle \sum}_{i=1}^{n} V_{ij}^{2})^{1/2}  \sqrt{\log p}\\
&\leq   \sqrt{B \log p} \max_{1 \leq j \leq p} ({\textstyle \sum}_{i=1}^{n} V_{ij})^{1/2},
\end{align*}
where $B := \max_{1 \leq i \leq n} \max_{1 \leq j \leq p} V_{ij}$. Hence by Fubini's theorem and the Cauchy-Schwarz inequality,
\begin{align*}
\Ep [ \max_{1 \leq j \leq p} | {\textstyle \sum}_{i=1}^{n} \eps_{i} V_{ij}  | ] &\lesssim \sqrt{\Ep [ B ] \log p} ( \Ep [ \max_{1 \leq j \leq p} {\textstyle \sum}_{i=1}^{n} V_{ij} ])^{1/2} \\
& = \sqrt{\Ep [ B ] \log p}  \sqrt{I}.
\end{align*}
Therefore, we have
\begin{equation*}
I \lesssim \max_{1 \leq j \leq p} \Ep [ {\textstyle \sum}_{i=1}^{n} V_{ij} ] + \sqrt{\Ep [ B ] \log p} \sqrt{I} =: a + b \sqrt{I}.
\end{equation*}
Solving this inequality, we conclude that $I \lesssim a + b^{2}$.
\end{proof}

\begin{proof}[Proof of Lemma \ref{lem: maximal ineq}]
Let $\eps_{1},\dots,\eps_{n}$ be independent Rademacher random variables independent of $Z_{1},\dots,Z_{n}$. Then arguing as in the previous proof, we have
\begin{align*}
\Ep [\max_{1 \leq j \leq p} | {\textstyle \sum}_{i=1}^{n} (Z_{ij} - \Ep [Z_{ij}]) | ]
&\leq 2 \Ep [\max_{1 \leq j \leq p} | {\textstyle \sum}_{i=1}^{n} \eps_{i}Z_{ij} | ] \\
&\lesssim \Ep [ \max_{1 \leq j \leq p} ( {\textstyle \sum}_{i=1}^{n} Z_{ij}^{2} )^{1/2} ]  \sqrt{\log p} \\
&\leq (\Ep [ \max_{1 \leq j \leq p} {\textstyle \sum}_{i=1}^{n} Z_{ij}^{2}  ] )^{1/2} \sqrt{\log p}. \quad (\text{Jensen})
\end{align*}
By Lemma \ref{lem: maximal ineq nonnegative} applied to $V_{ij} = Z_{ij}^{2}$, we have
\begin{equation*}
\Ep [ \max_{1 \leq j \leq p} {\textstyle \sum}_{i=1}^{n} Z_{ij}^{2}  ] \lesssim \sigma^{2} + \Ep [ M^{2} ] \log p.
\end{equation*}
This implies the desired conclusion.
\end{proof}

\section*{Acknowledgments}
V. Chernozhukov and D. Chetverikov  are supported by a National Science Foundation grant. K. Kato is supported by the Grant-in-Aid for Young Scientists (B) (22730179, 25780152), the Japan Society for the Promotion of Science. We would like to thank the Editors and an anonymous referee for their careful review.

\end{document}